\newtheorem{theorem}{Theorem}[section]
\newtheorem{corollary}[theorem]{Corollary}
\newtheorem{proposition}[theorem]{Proposition}
\newtheorem{definition}{Definition}
\newtheorem{notation}{Notation}
\def\Z{\mathbb{Z}}
\def\R{\mathbb{R}}
\def\C{\mathbb{C}}
\def\P{\mathbb{P}}
\def\e{\epsilon}
\def\G{\mathcal{G}}
\def\F{\mathcal{F}}
\def\L{\Lambda}
\def\g{\Gamma}
\def\gm{\gamma}
\def\t{\theta}
\def\la{\lambda}
\def\k{\mathbb{K}}
\def\D{\mathcal{D}}
\def\eps{\epsilon}
\def\o{\omega}
\def\z{\zeta}
\def\c{\mathcal{C}}
\def\j{\mathfrak{g}}
\def\tj{\Phi(\mathfrak{g})}
\def\e{\mathcal{E}}
\def\b{\mathcal{B}}
\def\el{\mathcal{L}}
\def\S{\mathcal{S}}
\def\T{\mathcal{T}}
\def\J{\mathfrak{J}}
\def\I{\mathcal{I}}
\def\k{\kappa}
\def\u{\Upsilon}
\def\rr{\mathcal{R}}
\def\nat{\mathbb{N} \cup \{0\}}
\def\uz{\underline{\zeta}}
\def\uout{\Upsilon_{\out}}
\def\Tau{\mathcal{T}}
\def\out{\mathrm{out}}
\def\iin{\mathrm{in}}
\def\els{\mathcal{L}_{\Sigma}}
\renewcommand{\l}[0]{\left }
\renewcommand{\r}[0]{\right}
\renewcommand*{\@cite@ofmt}{\hbox}
\begin{document}

\title{\bf Continuum Percolation for Gaussian zeroes and Ginibre eigenvalues }
\author{
\begin{tabular}{c}
\href{http://math.berkeley.edu/~subhro/}{Subhroshekhar Ghosh}\\ Department of Mathematics\\ University of California, Berkeley\\ subhro@math.berkeley.edu
\end{tabular}
\and 
\begin{tabular}{c}
\href{http://math.iisc.ernet.in/~manju/}{Manjunath Krishnapur}\\ Department of Mathematics\\ Indian Institute of Science\\ manju@math.iisc.ernet.in
\end{tabular}
\and 
\begin{tabular}{c}
\href{http://research.microsoft.com/en-us/um/people/peres/}{Yuval Peres}\\ Theory Group\\ Microsoft Research, Redmond\\ peres@microsoft.com
\end{tabular}
}
\date{}
\maketitle
\thispagestyle{empty}

\begin{abstract}
We study continuum percolation on certain negatively dependent point processes on $\R^2$. Specifically, we study the Ginibre ensemble and the planar Gaussian zero process, which are the two main natural models of translation invariant point processes on the plane exhibiting local repulsion. For the Ginibre ensemble, we establish  the uniqueness of infinite cluster in the supercritical phase. For the Gaussian zero process, we establish that a non-trivial critical radius exists, and we prove the uniqueness of infinite cluster in the supercritical regime.
\end{abstract}

\newpage
\section{Introduction}
\label{intro}
Let $\Pi$ be a simple point process in Euclidean plane. We place open disks of the same radius $r$ around each point of $\Pi$, and say that two points are neighbours if the corresponding disks overlap. Two points in $\Pi$ are connected if there is a sequence of neighbouring points of $\Pi$ that include these two points. We can then study the statistical properties of the maximal connected components (referred to as ``clusters'') of the points of $\Pi$. Of particular interest are the infinite cluster(s).  This is the basic setting of the continuum percolation model, also referred to as the Boolean model. 

It is clear from an easy coupling argument that the probability that an infinite cluster exists is an increasing function of the radius of the disks. We say that there is a non-trivial critical radius if there exists an $0<r_c<\infty$ such that the probability of having an infinite cluster is zero when $0<r<r_c$ and the same probability is is strictly positive when $r_c<r<\infty$.  For $r_c<r<\infty$,  one can ask whether the infinite cluster is unique. For point processes which are ergodic under the action of translations, the event that there is an infinite cluster is translation-invariant, and therefore  its probability is either 0 or 1. Similarly, the number of infinite clusters is a translation-invariant random variable, and therefore a.s. a constant.

In this paper we focus on the two main natural examples of repelling point processes on the plane: the Ginibre ensemble, arising as weak limits of certain random matrix eigenvalues, and the Gaussian zero process arising as weak limits of zeroes of certain random polynomials. The latter process will be abbreviated as the GAF zero process.
For details on these models, see Section \ref{models}.

In \cite{BY} (see Corollary 3.7 and the discussion thereafter) it has been shown that there exists a non-zero and finite critical radius for the Ginibre ensemble. 

In this paper we prove the following theorems:

\begin{theorem}
\label{gin}
In the Boolean percolation model on the Ginibre ensemble, a.s. there is exactly one infinite cluster in the supercritical regime.
\end{theorem}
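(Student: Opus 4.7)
The strategy is to adapt the classical Burton--Keane uniqueness argument to the Ginibre ensemble. Since the Ginibre process is translation invariant and mixing, it is ergodic under translations, so the number $N_\infty$ of infinite clusters is an almost sure constant $k \in \{0,1,2,\ldots\} \cup \{\infty\}$. In the supercritical regime $k \geq 1$, and the task reduces to ruling out $k \geq 2$ and $k=\infty$.

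The main technical ingredient is an \emph{insertion tolerance} (finite energy) property for Ginibre: for any bounded open $U \subset \R^2$, any partition $U = U_1 \sqcup \cdots \sqcup U_m$ into sub-domains of positive Lebesgue measure, and any tuple $(n_1,\ldots,n_m) \in \N^m$, with positive conditional probability (given the configuration outside $U$) the set $U_i$ contains at least $n_i$ points for each $i$. For Ginibre this should follow from its determinantal structure: conditioning on the restriction to $U^c$ produces a determinantal measure on $U$ whose kernel can be written explicitly, and pointwise lower bounds on the resulting conditional correlation functions yield positivity of the desired events. I expect this to be the main obstacle, since it replaces the property played trivially by independence in the Poisson case.

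Granted insertion tolerance, the Burton--Keane scheme runs as in the classical setting. To exclude $2 \leq k < \infty$, fix $R$ so large that with positive probability all $k$ infinite clusters meet $B(0,R)$, and then, conditional on that event, use insertion tolerance to place a fine lattice of points in $B(0,R)$ whose radius-$r$ disks form a single overlapping chain linking every cluster passing through the ball. The resulting configuration has strictly fewer than $k$ infinite clusters and positive probability under the Ginibre law, contradicting the a.s.\ constancy of $N_\infty$. To exclude $k=\infty$, partition the plane into boxes at a scale comparable to $r$ and, using both insertion and deletion tolerance, show that any prescribed box has positive probability of being a trifurcation point of the infinite cluster. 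Burton--Keane's combinatorial lemma then bounds the number of trifurcation boxes in a window $\Lambda_n$ by $O(|\partial \Lambda_n|)$, while translation invariance forces a density of order $|\Lambda_n|$; the contradiction completes the proof.
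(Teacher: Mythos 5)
There is a genuine gap in the central technical step. You propose to establish an \emph{insertion tolerance} property for the Ginibre ensemble: given the configuration outside a bounded open set $U$, one can with positive conditional probability place arbitrarily many additional points in prescribed sub-regions of $U$. This is false for Ginibre. The process is \emph{rigid}: conditionally on the configuration in $\D^c$, the number of points in $\D$ is almost surely a deterministic function $N(\G_{\out})$ of the outside configuration (Theorem~\ref{gin-1}). So no determinantal-kernel estimate can deliver the statement you want; inserting or deleting even a single point in $U$ while keeping $U^c$ fixed has conditional probability zero. The same obstruction also kills the ``deletion tolerance'' you invoke in the trifurcation step. You flagged this as the likely main obstacle, but the resolution you sketch (lower bounds on conditional correlation functions) cannot succeed because the event in question is genuinely a null event.

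What is true, and what the paper uses, is a \emph{tolerance to rearrangement}: once the number $N(\G_{\out})$ of points in $\D$ is fixed, the conditional law of their locations is mutually absolutely continuous with Lebesgue measure on $\D^{N(\G_{\out})}$ (Theorem~\ref{gin-2}). Your Burton--Keane scheme can be rescued by replacing insertion with relocation: instead of inserting a connecting chain, one asks (with positive probability, by ergodicity) for a finite cluster in an annulus $\D_2\setminus\D_1$ containing enough points, and then \emph{moves} those points into $\D_1$ to merge the infinite clusters meeting $\D_1$, keeping the total count in $\D_2$ unchanged. Absolute continuity of the conditional law then gives positive probability to the modified configuration, yielding the contradiction with a.s.\ constancy of $\L(r)$ for $1<\L(r)<\infty$; for $\L(r)=\infty$ one runs the same relocation with three clusters and invokes the continuum analogue of the trifurcation bound (Proposition~\ref{critprop}) rather than a point-by-point trifurcation density argument, since the latter again presupposes a finite-energy property Ginibre does not have.
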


\begin{theorem}
\label{gaf}
In the Boolean percolation model on the GAF zero process, there exists a non-zero and finite critical radius. Moreover,  in the supercritical regime,  a.s.  there is exactly one infinite cluster.
\end{theorem}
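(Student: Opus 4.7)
The statement combines three sub-claims: $r_c>0$, $r_c<\infty$, and uniqueness of the infinite cluster for $r>r_c$. I would attack them in that order.

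For $r_c>0$ (the subcritical phase is non-empty), the plan is a first-moment path count. If a GAF zero lies in an infinite cluster, there is an infinite self-avoiding sequence of zeros $z_0,z_1,z_2,\ldots$ with $|z_i-z_{i+1}|<2r$. Integrating the explicit $k$-point intensities of the planar GAF zero process (available from Hannay's formula and its successors) along the prescribed path geometry yields a bound of the form $(Cr^2)^n$ for the expected number of such paths of length $n$ meeting a bounded window, with $C$ coming from the polynomial control on the correlation functions. For $r$ small enough this tends to $0$, ruling out an infinite path a.s.

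For $r_c<\infty$ (the supercritical phase is non-empty), the plan is a coarse-graining argument. Tile $\R^2$ by squares of side $L$ and call a square \textit{good} if it contains at least one GAF zero. Once $r>L\sqrt{2}$, two points in adjacent good squares automatically lie in the same Boolean cluster, so percolation of good squares forces percolation of the Boolean model. Since the GAF has covariance kernel $e^{z\bar w}$, the joint law of $|f|$ in two windows at distance $d$ decorrelates super-polynomially, and the good-square field therefore has rapidly decaying correlations. Invoking the Liggett--Schonmann--Stacey stochastic domination theorem, the good squares dominate Bernoulli site percolation on $\Z^2$ with parameter $p(L)\to 1$ as $L\to\infty$; choosing $L$ beyond the site-percolation threshold yields percolation for every $r>L\sqrt{2}$.

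For uniqueness in the supercritical regime, the plan is a Burton--Keane argument. The GAF zero process is ergodic (indeed mixing) under translations, so the number $N$ of infinite clusters is a.s.\ constant in $\{0,1,2,\ldots,\infty\}$, and it remains to exclude $2\le N\le\infty$. The classical proof requires a finite-energy (insertion/deletion) property that fails outright here: by the rigidity theorem for the GAF zero process, the number of zeros in any bounded domain $D$ is measurable with respect to the configuration outside $D$, so points cannot be freely added or removed. The substitute I would use is that, conditionally on the exterior configuration and on the (thereby determined) number $k$ of interior zeros, the joint law of those $k$ zeros is absolutely continuous with respect to Lebesgue measure on $D^k$, with a strictly positive Janossy-type density coming from the Gaussian structure of the GAF. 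This is enough flexibility to rearrange the interior zeros, with positive conditional probability, into a configuration that creates a trifurcation of the cluster graph at a prescribed point, after which the standard Burton--Keane trifurcation count yields the contradiction, forcing $N\le 1$.

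The main obstacle is this last step: in the absence of genuine insertion/deletion finite energy, one must replace Burton--Keane's trifurcation insertion by a conditional rearrangement that respects the rigid count, and then verify that the trifurcation-producing configurations carry positive mass under the conditional density. Establishing this rigidity-respecting version of finite energy for the GAF zero process, and threading it through the trifurcation-density bound, is where the substantive work of the proof concentrates.
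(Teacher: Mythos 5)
Your outline identifies the right three sub-claims, but there are genuine gaps at two points, one of which is fatal as stated.

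The most serious problem is in the uniqueness step. You assert that, conditionally on the exterior and on the determined number $k$ of interior zeros, the joint law of the interior zeros is absolutely continuous with respect to Lebesgue measure on $D^k$. That is false for the GAF zero process. The rigidity phenomenon for GAF zeros conserves \emph{two} quantities under spatial conditioning: the number of interior zeros \emph{and} their sum (equivalently the centre of mass). The conditional law of the interior zero vector is therefore supported on the affine subspace $\Sigma_{S}=\{\uz\in D^k : \sum_j \zeta_j = S\}$ of real codimension two, and is singular with respect to Lebesgue measure on $D^k$; it is only mutually absolutely continuous with respect to Lebesgue measure \emph{on that subspace}. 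This breaks your plan directly: any rearrangement of interior zeros that creates a trifurcation must simultaneously preserve the sum, otherwise the target configuration has conditional probability zero. The paper handles this by requiring, in addition to the cluster being merged, a separate ``spare'' cluster at a safe distance that can be rigidly translated to compensate the change in the sum without affecting connectivity; you would need some analogous compensating device, and its existence with positive probability is part of the work. Also note that the paper works with a Meester--Roy criterion that dispatches $\Lambda=\infty$ via the event $E(R)$ and treats $1<\Lambda<\infty$ by a direct two-cluster merge, rather than by a literal Burton--Keane trifurcation count; either route can be made to work, but the conserved-sum issue must be addressed in either case.

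For $r_c>0$, the first-moment path count over self-avoiding sequences of zeros requires a uniform bound of the form $\rho_n(z_1,\dots,z_n)\le C^n$ for the $n$-point intensities of the GAF zero process, and that is not readily available. Unlike the determinantal (Ginibre) case, the GAF zero correlation functions are given by permanent-over-determinant expressions (Hammersley-type formulas) and do not obviously admit product domination or a clean uniform exponential bound; the paper explicitly notes that Offord-type and related standard hole/overcrowding estimates do not directly apply, which is why it instead proves an overcrowding estimate for connected unions of $L$ standard squares via the Nazarov--Sodin almost-independence theorem plus a Cantor-type construction, and then runs a discretized Peierls argument. Your coarse-graining plus LSS approach for $r_c<\infty$ is plausible in spirit and is in the same family as the paper's hole-probability-plus-dual-circuit argument, but the LSS hypothesis is a bound on $\P(\text{bad square}\mid\text{arbitrary configuration outside a neighbourhood})$, and verifying this for the GAF is exactly the kind of conditional control that is delicate here; the almost-independence theorem is the right tool, but one must actually thread it through (as the paper does via its Cantor decomposition) rather than cite super-polynomial decorrelation of $|f|$ as if that settled it.
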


Continuum percolation is  well-studied in theoretical and applied probability, as a model of communication networks, disease-spreading through a forest, and many other phenomena. This model, also referred to as the Gilbert disk model or the Boolean model,  is almost as old as the more popular discrete bond percolation theory. It was introduced by Gilbert in 1961 \cite{Gil}. In the subsequent years, it has been studied extensively by different authors, such as \cite{Ha}, \cite{Mo},\cite{MR} and \cite{Pe}, among others. Closely related models such as random geometric graphs, random connection models, face percolation in random Voronoi tessellations have also been studied. For a detailed discussion of continuum percolation and related models, we refer the reader to \cite{MR} and \cite{BoRi}. For further details on point processes, we refer to \cite{DV}.

Much of the literature so far has focused on studying continuum percolation where the underlying point process $\Pi$ is either a Poisson process or a variant thereof. Most of these models exhibit some kind of spatial independence. This property is extremely useful in the study of continuum percolation on these models. E.g., the spatial independence enables us to carry over Peierls type argument from discrete percolation theory for establishing phase transitions in the existence of infinite clusters, or Burton and Keane type arguments in order to prove uniqueness of infinite clusters. 

While the Poisson process is the most extensively studied point process, the spatial independence built into it makes it less effective as a model for many natural phenomena. This makes it of interest to study point processes with non-trivial spatial correlation,  particularly those where the points exhibit repulsive behaviour. On the complex plane, the main natural examples of translation-invariant point processes exhibiting repulsion are the Ginibre ensemble and the Gaussian zero process. The latter process is also known as the Gaussian analytic function (GAF) zero process. The former arises as weak limits of eigenvalues of (non-Hermitian) random matrices, while the latter arises as weak limits of zeros of Gaussian polynomials. For precise definitions of these processes we refer the reader to Section \ref{models}.

The Ginibre ensemble was introduced by the physicist Ginibre \cite{Gin} as a physical model based on non-Hermitian random matrices. In the mathematics literature it has been studied by \cite{RV} and \cite{Kr} among others. The Gaussian zero process also has been studied in either field, see,  e.g. \cite{BoBL}, \cite{STs1},\cite{STs2},\cite{STs3},\cite{NSV}, \cite{FH}. We refer the reader to \cite{NS} for a survey. These models are distinguished elements in broader classes of repulsive point processes. For example, the Ginibre ensemble is essentially the unique determinantal process on the plane whose kernel $K(z,w)$ is holomorphic in the first variable, and conjugate holomorphic in the second (\cite{Kr}). The Gaussian zero process is essentially unique (up to scaling) among the zero sets of  Gaussian power series in that its distribution is invariant under  translations (\cite{STs1}). For an exposition on both the processes, we refer the reader to \cite{HKPV}.

The strong spatial correlation present in the above models severely limits the effectiveness of standard independence-based arguments from the Poisson setting while studying continuum percolation. Our aim in this paper is to study continuum percolation on the two natural models of repulsive point processes mentioned above, and establish the basic results. Namely, we prove that there is indeed a non-trivial critical radius for these processes, and the infinite cluster is unique when we are in the supercritical regime. 

While the spatial independence of the Poisson process is not available in these models, we observe that this obstacle can be largely overcome if we can obtain detailed understanding of spatial conditioning in these point processes. Recently, such understanding  has been obtained in \cite{GNPS}, where it has been shown that for a given domain $\D$, the point configuration outside $\D$ determines a.s. the number of points in $\D$ (in the Ginibre case) and their number and the centre of mass (in the Gaussian zeroes case), and ``nothing further''. For a precise statement of the results, we refer the reader to the Theorems \ref{gin-1}, \ref{gin-2}, \ref{gaf-1} and \ref{gaf-2} quoted in this paper. In the present work, we demonstrate that along with certain estimates on the strength of spatial dependence, this understanding is sufficient to overcome the problem of lack of independence, and answer the basic questions in continuum percolation on these two processes. 

For determinantal point processes in Euclidean space, it is known that a non-trivial critical radius exists, see, e.g. \cite{BY}. This covers the  Ginibre ensemble.  The uniqueness of the infinite cluster (in the supercritical regime), however, was not known, and this is proved in Section \ref{uqgin}. For the Gaussian zero process, both the  existence of a non-trivial critical radius and the uniqueness of the infinite cluster (when one exists) are new results, and are established in Sections \ref{gafzeros} and \ref{uqgaf} respectively.

In the case of the GAF zero process, while proving our main results we derive  new estimates for hole probabilities. Let $B(0;R)$ be the disk with centre at the origin and radius $R$. The hole probability for $B(0;R)$ is the probability $p(R)=\P \l(B(0;R) \text{ has no GAF zeroes } \r)$. It has been studied in detail in \cite{STs3}, and culminated in the work of Nishry \cite{Nis} where he obtained the precise asymptotics as $R \to \infty$.  It turns out that as $R \to \infty$ we have $ -\log p(R)/R^4  \to c$ where $c>0$ is a constant. In this paper, however, we need to understand  hole probabilities for much more general sets than disks. 

Let us divide the plane into $\t \times \t$ squares given by the grid $\t \Z^2$. Let $\g(L)$ be a connected set comprising of $L$ such squares. We prove that for $\t \ge \t_0$ (an absolute constant), there is a quantity $c(\t) > 0$ such that $\P\l(\g(L) \text{ has no GAF zeroes }\r) \le \exp \l( -c(\t)L \r)$. The techniques generally used in the literature to study hole probabilities, e.g. Offord-type estimates, do not readily apply to this situation. Instead, we exploit the almost independence property of GAF, and combine it with a Cantor set type construction to obtain the desired result. 

\subsection{The Boolean model}
\label{bm}
Let $\Pi$ be a point process in $\R^2$ whose one-point and two-point intensity measures are absolutely continuous with respect to the Lebesgue measure on $\R^2$ and $\R^2 \times \R^2$ respectively. We say two points $x,y$ of $\Pi$ are \textsl{neighbours} of each other if $\| x-y \|_2 < 2r$. Equivalently, we can place open disks of radius $r$ around each point; then two points are connected if and only the corresponding disks intersect.  Two points $x,y$ of $\Pi$ are \textsl{neighbours} if $\exists$ a finite sequence of points $x_0,x_1,\cdots,x_n \in \Pi$ such that $x_0=x,x_n=y$ and $x_{j+1}$ is the neighbour of $x_j$ for $0 \le j \le n-1$.  

This is the Boolean percolation model on the point process $\Pi$ with radius $r$, denoted by $X(\Pi,r)$.

Connectivity as defined above is an equivalence relation, and the maximal connected components are called clusters. The size of a cluster is the number of points of $\Pi$ in that cluster. We say that the model percolates if there is at least one infinite cluster. We say that $x_0 \in \R^2$ is connected to the infinity if there is a point $x\in \Pi$ such that $\|x-x_0\|_2 < r$ and $x$ belongs to an infinite cluster. The probability of having an infinite cluster and that of origin being connected to infinity both depend on the parameter $r$. The trivial coupling obtained from the inclusion of a disk of radius $r'$ inside a disk of radius $r$ with the same centre (for $r'<r$) shows that both of these probabilities are non-decreasing in $r$. 

\begin{notation}
\label{lar}
Let $\L(r)$ denote the number of infinite clusters when the disks are of radius $r$.
\end{notation}

\begin{definition}
The point process $\Pi$ is said to have a critical radius $0<r_c<\infty$ if  $\L(r)=0$ a.s. when $0<r<r_c$ and $\P \l( \L(r)>0 \r) >0$ when $r_c<r<\infty$.
\end{definition}

For any point process $\Pi$ in $\R^2$, the group of translations of $\R^2$ acts in a natural way on $\Pi$: a translation $T$ takes the point $x \in \Pi$ to $T(x)$, the resulting point process being denoted $T_*{\Pi}$. The process $\Pi$ is said to be translation invariant if $T_{*}\Pi$ has the same distribution as $\Pi$ for all translations $T$. The process is said to be ergodic under translations if this action is ergodic. 

For any translation invariant point process, the probability of the origin being connected to infinity is the same as that for any $x \in \R^2$, so by a simple union bound over $x \in \R^2$ with rational co-ordinates, the probability of having an infinite cluster is positive if and only if the probability of the origin being connected to infinity is positive. 

Clearly, $\L(r)$ is a translation-invariant random variable.   If $\Pi$ is ergodic, $\L(r)$  is a.s. a constant $\in \Z_{\ge 0}$.  In particular, the probability of having an infinite cluster is either 0 or 1.

\subsection{The underlying graph}
\label{ug}
Consider the Boolean model with radius $r$ on a point process $\Pi$ in $\R^2$. By the \textsl{underlying graph} $\j$ of this model we mean the graph whose vertices are the points of $\Pi$ and two vertices $x,y$ are neighbours iff $\| x-y \|_2 < 2r $. By $\tj$ we denote the subset of $\R^2$ formed by the union of  the points of $\Pi$ and straight line segments drawn between two such points whenever their mutual distance is less than $2r$. Since the two point intensity measure of $\Pi$ is absolutely continuous with respect to the Lebesgue measure on $\R^2 \times \R^2$, therefore the probability that there are two points of $\Pi$ at a mutual distance  $2r$ is 0. Hence, if a take a large open disk $D$, then there exists an $\eps > 0$ such that for each point $x$ of $\Pi$ in $D$ we have $\b(x;\eps) \subset D$ and $x$ can be moved to any new position in the open disk $\b(x;\eps)$ without changing the connectivity properties of $\j$.   

\section{The Models}
\label{models}
\subsection{The Ginibre Ensemble}
Let us consider an $n \times n$ matrix $X_n, n\ge 1$ whose entries are i.i.d. standard complex Gaussians. 
 The vector of its eigenvalues, in uniform random order, has the joint density (with respect to the Lebesgue measure on $\C^n$) given by
\[
 \label{gindef}
p(z_1,\cdots,z_n)= \frac{1}{\pi^n\prod_{k=1}^n k!}  e^{-\sum_{k=0}^n |z_k|^2} \prod_{i<j} |z_i-z_j|^2
\]
Recall that a determinantal point process on the Euclidean space $\R^d$ with kernel $K$ and background measure $\mu$ is a point process on $\R^d$ whose $k$-point intensity functions with respect to the measure $\mu^{\otimes k}$ are given by \[ \rho_k(x_1,\cdots,x_k)= \text{det} \bigg[  \l( K(x_i,x_j) \r)_{i,j=1}^k  \bigg] \]
Typically, $K$ has to be such that the integral operator defined by $K$ is a non-negative trace class contraction mapping $L^2(\mu) \to L^2(\mu)$. For a detailed study of determinantal point processes, we refer the reader to \cite{HKPV} or \cite{Sos}. A simple calculation involving Vandermonde determinants shows that the eigenvalues of $X_n$
(considered as a random point configuration) form a determinantal point process in $\R^2$. Its kernel is given by $ K_n(z,w)=\sum_{k=0}^{n-1} \frac{(z \bar{w})^k}{k!}$ with respect to the background measure $ \, d\gamma (z) = \frac{1}{\pi}e^{- |z|^2} d\mathcal{L}(z)$ where $\mathcal{L}$ denotes the Lebesgue measure on $\C$. 
This point process is the Ginibre ensemble (of dimension $n$), which we will denote by $\G_n$.
As $n\rightarrow \infty$, these point processes converge, in distribution, to a determinantal point process given by the kernel $ K(z,w)=e^{z\bar{w}}=\sum_{k=0}^{\infty} \frac{(z \bar{w})^k}{k!}$ with respect to the same background measure $\gamma$. This limiting point process is the infinite Ginibre ensemble $\G$. It is known that $\G$ is ergodic under the natural action of the translations of $\R^2$.

\subsection{The GAF zero process}
Let $\{\xi_k \}_{k=1}^{\infty}$ be a sequence of i.i.d. standard complex Gaussians. Define, for $n \ge 0$, 
\[
f_n(z) =\sum_{k=0}^{n}\xi_k \frac{z^k}{\sqrt{k!}} \, \, , \qquad \qquad f(z) =\sum_{k=0}^{\infty}\xi_k \frac{z^k}{\sqrt{k!}} \, \, .
\]
These are complex Gaussian processes on $\C$ with covariance kernels given by 
\[ K_n(z,w)=\sum_{k=0}^{n} \frac{(z \bar{w})^k}{k!} \qquad \text{and} \qquad K(z,w)=\sum_{k=0}^{\infty} \frac{(z \bar{w})^k}{k!}\]  respectively. A.s. $f$ is an  entire function and the $f_n$-s converge  to $f$ (in the sense of the uniform convergence of functions on compact sets). It is not hard to see (e.g., via Rouche's theorem) that this implies that the corresponding point processes of zeroes, denoted by $\F_n$, converge a.s. to the zero process $\F$ of the GAF (in the sense of locally finite point configurations converging on compact sets). It is known that $\F$ is ergodic under the natural action of the translations of the plane.

\section{Discrete approximation and the critical radius}
The first step in our study of continuum percolation will be to relate our events of interest to  events defined with respect to a grid, so that the problem becomes amenable to techniques similar to the ones that are effective in studying percolation in discrete settings.  

\begin{definition}
\label{bl}
Let $\t>0$ be a parameter, to be called  base length, and consider the grid $\t\Z^2$. Each $\t \times \t$ closed square obtained from the edges of this grid will be referred to as a \textsl{standard square}.  Two (distinct) standard squares are said to be neighbours if their boundaries intersect. So, each standard square has 8 neighbours. 
\end{definition}

\begin{notation}
\label{misc}
For $x \in \R^2$ and $R>0$, we will denote by $\b(x;R)$ the open disk with centre $x$ and radius $R$.

We define $W_R$, the box of size $R$, to be the set $W_{R}:=\{x\in \R^2 : \| x \|_{\infty} = R \}$.

For a subset $K \subset \R^2$, we will denote by $\overline{K}$ the topological closure of $K$. 
\end{notation}

\begin{definition}
\label{defpath}
Fix a radius  $r>0$ and a base length $\t >0$. 

A continuum path  $\gm$ of length $n$ is defined to be a piecewise linear curve whose vertices are given by the  sequence of points $x_j \in \Pi, 1 \le j \le n$ such that $x_{i+1}$ is a neighbour of $x_i$ for $1\le i \le n-1$.  

For a continuum path $\gm$ with vertices $\{x_1,\cdots,x_n\}$, we denote by $S(\gm)$ the set $\bigcup_{i=1}^{n} \b(x_i;r)$.

A lattice path $\g$  of length $n$ is defined to be a sequence of standard squares $\{X_j\}_{j=1}^n$ such that $X_{i+1}$ is a neighbour of $X_i$ for $1\le i \le n-1$. A lattice path $\{X_i\}_{i=1}^n$ is said to be non-repeating if $X_i \ne X_j$ for $i \ne j$. 

For a lattice path $\g=\{X_1,\cdots,X_n\}$, we denote by $V(\g)$ the set $\bigcup_{i=1}^{n} X_i$.

We say that a continuum path $\gm$ \textsl{connects} the origin to $W_R$ if $0 \in S(\gm)$ and $S(\gm) \cap W_R \ne \varphi$. For $R \in \Z_{+}$, we say that a lattice path $\g$ connects the origin to $W_{R\t}$ if $0 \in V(\g)$ and $V(\g) \cap W_{R\t} \ne \varphi$.
\end{definition}

With these notions in hand, we are ready to state:

\begin{proposition}
 \label{discr1}
Consider the Boolean percolation model $X(\Pi,r)$.  Let the base length $\t=r/\sqrt{5}$. Suppose, for some $L \in \Z_{+}$, there exists a non-repeating lattice path $\g=\{X_1,\cdots,X_n\}$ that connects 0 to $W_{L\t}$ with each $X_i$ containing at least one point in $\Pi$. Then there exists a continuum path $\gm$ that connects 0 to $W_{L\t}$. 
\end{proposition}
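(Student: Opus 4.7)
The plan is to construct the required continuum path by choosing, for each $i$, an arbitrary point $x_i \in \Pi \cap X_i$ (which exists by hypothesis), and letting $\gm$ be the piecewise linear curve with vertices $x_1, \ldots, x_n$ in order. For this to qualify as a continuum path in the sense of Definition \ref{defpath}, the consecutive pairs $x_i, x_{i+1}$ must be Boolean neighbours, i.e.\ satisfy $\|x_i - x_{i+1}\|_2 < 2r$.

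The key geometric input is a diameter bound for the union of two neighbouring standard squares. If $X_i$ and $X_{i+1}$ share a full edge, then $X_i \cup X_{i+1}$ is a $2\t \times \t$ rectangle and the farthest pair of points in it are at distance $\sqrt{5}\,\t$; if they meet only at a corner, the farthest pair is at distance $2\sqrt{2}\,\t$. With the calibration $\t = r/\sqrt{5}$ these two quantities are $r$ and $r\sqrt{8/5}$ respectively, and both are strictly less than $2r$. Hence $\|x_i - x_{i+1}\|_2 < 2r$ for every $i$, and the piecewise linear curve through $x_1, \ldots, x_n$ is a legitimate continuum path.

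It remains to check the endpoint conditions. Since $0 \in V(\g)$, we have $0 \in X_j$ for some $j$, so $\|x_j\|_2 \le \mathrm{diam}(X_j) = \sqrt{2}\,\t = r\sqrt{2/5} < r$, which gives $0 \in \b(x_j; r) \subset S(\gm)$. Similarly, the hypothesis $V(\g) \cap W_{L\t} \ne \varphi$ supplies a point $y \in X_k \cap W_{L\t}$ for some $k$, and the same diameter bound gives $\|x_k - y\|_2 \le \sqrt{2}\,\t < r$, so $y \in \b(x_k; r) \cap W_{L\t} \subset S(\gm) \cap W_{L\t}$.

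I do not expect a serious obstacle in this direction: everything reduces to a comparison of two explicit distances. The non-repetition hypothesis on $\g$ is in fact not used in this implication; it will play its role in the converse inclusion elsewhere in the paper, where the calibration $\t = r/\sqrt{5}$ will presumably be exploited from the opposite side to cover $S(\gm)$ by a bounded number of standard squares.
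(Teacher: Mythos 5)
Your proposal is correct and follows the same line of reasoning as the paper: the paper's one-line proof simply invokes the fact that with $\t = r/\sqrt{5}$, points in adjacent standard squares are within distance $2r$ (so the radius-$r$ disks overlap), and you have filled in the explicit diameter computations for both edge-adjacent and corner-adjacent squares, together with the endpoint checks that $0 \in S(\gm)$ and $S(\gm) \cap W_{L\t} \ne \varphi$. Your observation that the non-repetition hypothesis is not actually needed for this implication is also accurate.
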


\begin{proof}
The result follows from the fact that with $\t= r/\sqrt{5}$, disks of radius $r$ centred at points in adjacent standard squares intersect with each other. 
\end{proof}

\begin{proposition}
 \label{discr2}
Fix a base length $\t$ and an integer $k \ge 0$. For any $0<r<\t/18 k$ the following happens:  
Suppose there exists a continuum path $\gm$ connecting 0 to $B_{L\t}$ (where $L \in \Z_{+}$). Then there exists a non-repeating lattice path $\g$ connecting 0 to $B_{L\t}$ such that each standard square in $\g$ contains $\ge k$ points $\in \Pi$.
\end{proposition}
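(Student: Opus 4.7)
The plan is to construct $\g$ in three stages. First, I identify anchor squares $A$ near $0$ and $B$ near $W_{L\t}$ that automatically contain $\ge k$ points of $\Pi$. Because $\|x_{i+1}-x_i\|_2<2r<\t/(9k)$, the first $4k$ vertices $x_1,\dots,x_{4k}$ of $\gm$ satisfy $|x_i|\le r+(i-1)\cdot 2r<8kr<4\t/9$, so they all lie in $[-\t,\t]^2$, the union of the four standard squares meeting at $0$. Pigeonhole produces one such square $A$ containing at least $k$ of these vertices (hence at least $k$ points of $\Pi$), with $0$ at its corner; the same argument applied to the last $4k$ vertices, which cluster within $4\t/9$ of a point of $W_{L\t}$, furnishes the analogous square $B$ intersecting $W_{L\t}$.

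Starting at the first vertex of $\gm$ inside $A$, I walk $\gm$ until the last vertex inside $B$ and record the sequence of standard squares that $\gm$ enters as it crosses grid lines. Because single edges of $\gm$ have length $<2r<\t$, consecutive entries in this sequence are either equal or neighbours in the grid, so the sequence is a (possibly repeating) lattice path from $A$ to $B$; a standard loop erasure leaves a non-repeating lattice path $\g_0$ with the same endpoints.

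The crux of the proof is the removal of any interior square $X$ of $\g_0$ that still contains fewer than $k$ points of $\Pi$. The visit of $\gm$ to such an $X$ uses at most $k-1$ vertices, so the entry and exit points of $\gm$ on $\partial X$ are separated by at most $(k+1)\cdot 2r<\t/9<\t$. Two boundary points of a $\t\times\t$ square at distance strictly less than $\t$ cannot lie on opposite sides; the same-side case would have produced a loop already erased in $\g_0$, and an adjacent-side configuration forces the two squares flanking $X$ in the natural sequence to share a corner of $X$ and thus be neighbours themselves, so $X$ can be deleted. Iterating this deletion (re-loop-erasing as needed) eventually yields a non-repeating lattice path from $A$ to $B$ every square of which contains $\ge k$ points of $\Pi$. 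The main delicate point is the book-keeping when several light squares lie in succession: one has to check visit by visit that the cumulative rerouting leaves the surviving heavy squares as grid-neighbours, which reduces to the same entry/exit estimate applied to each light visit in the run.
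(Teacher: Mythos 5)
Your overall strategy (start from the sequence of grid squares traversed by $\gm$, loop-erase, and then prune the squares with fewer than $k$ points) is genuinely different from the paper's, which instead {\em grows} a lattice path of heavy squares: the paper fixes a maximal lattice path $\g$ in which every square holds $\ge k$ points and, if $\g$ does not yet reach $B_{L\t}$, uses the portion of $\gm$ crossing the ``surround'' $\Sigma(\g)$ to extract one more heavy square in $\Sigma(\g)$ by pigeonhole, contradicting maximality. Your pruning approach, however, has a gap in the crucial step.

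The bound ``entry and exit points of $\gm$ on $\partial X$ are separated by at most $(k+1)\cdot 2r$'' assumes those two boundary points belong to the {\em same} visit of $\gm$ to $X$. After chronological loop erasure this need not be the case: if $S[j]=X$ in $\g_0$, the incoming crossing (from $S[j-1]$ into $X$) and the outgoing crossing (from $X$ into $S[j+1]$) can be separated by an entire excursion of $\gm$ that leaves $X$, wanders through other squares (all erased), and re-enters $X$ before finally departing towards $S[j+1]$. The only a~priori bound on the separation of these two boundary points is then the diameter $\t\sqrt 2$ of the square, which is useless for your case analysis. Even granting a single-visit bound, the iteration you acknowledge as ``delicate'' is where the argument actually breaks: if a maximal run $Y_1,\dots,Y_m$ of light squares sits between heavy squares $A$ and $B$, then after deleting $Y_1,\dots,Y_m$ you must show $A$ and $B$ are $8$-adjacent, but the cumulative displacement of $\gm$ across the run can be of order $m\,k\,r$, which is unbounded in $m$ and easily exceeds $\t$. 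The ``same entry/exit estimate applied to each light visit in the run'' controls each individual gluing but gives no control on the endpoints of the whole run, and the intermediate gluings $S[j-1]\leftrightarrow S[j+1]$ created by a deletion are not grid crossings of $\gm$ at all, so the geometric estimate cannot be re-applied to them. (There is also a smaller loose end: the pigeonholed anchor square $B$ near the end of $\gm$ is only guaranteed to be within distance $<\t$ of $W_{L\t}$; it need not actually meet $W_{L\t}$, so ``$\g$ connects $0$ to $B_{L\t}$'' is not yet established.)

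To repair this you would need an argument that bounds the length of light runs, or, more naturally, abandon pruning in favour of the paper's maximality/extension scheme, which never creates an adjacency unsupported by $\gm$ and hence never has to re-verify neighbourliness after a deletion.
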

\begin{proof}
Fix a radius $r$ such that $k < \t/{18r}$, and  let $\gm$ be a continuum path with vertices $\{x_i\}_{i=1}^n$ connecting 0 to $B_{L\t}$. A finite lattice path $\g_1$ is said to be contained in another finite lattice path $\g_2$, denoted by $\g_1 \subset \g_2$, if $V(\g_1)\subset V(\g_2)$. 

Now, consider the set $\Xi$ of all finite lattice paths  $\g$ (non-repeating or otherwise), $0 \in {V(\g)}$, such that each standard square in $\g$ contains $\ge k$ points. Clearly, $\subset$ is a partial order on $\Xi$. Moreover, $\Xi$ is non-empty, because $\gm$ must reach $L^{\infty}$ distance $\t$ from the origin, and in doing so must have at least $\t/2r$ points $\in \Pi$. The $4$ standard squares whose closures contain the origin contain these $\t/2r$ points, so at least one of them must have at least $\t/8r \ge k$ points in $\Pi$. 

Let $\g$ be a maximal element in $\Xi$ under $\subset$.  If $\g$ connects 0 to $B_{L\t}$ then we are done. Otherwise, we define the \textsl{surround} $\Sigma(\g)$ of $\g$ as the  union  of all standard squares which are neighbours of the standard squares in $\g$ and are contained in the unbounded component of the complement of $\g$. Since $\gm$  connects 0 to $B_{L\t}$, therefore $\gm$ intersects $\partial \Sigma(\g) \setminus V(\g)$. Let $j$ be the least index $\in [n]$ such that the line segment $(x_{j-1},x_j]$ intersects $\partial \Sigma(\g) \setminus V(\g)$. Since $r < \t$, we must have $x_{j-1} \in \text{ Int } \l(\Sigma(\g)\r)$, where $\text{ Int }(H)$ denotes the interior of a set $H$. Let $\sigma$ be a standard square in $\Sigma(\g)$ such that ${\sigma}$ contains $x_{j-1}$. Consider the continuum path $\gm'$ with vertices $\{x_{j},x_{j-1},\cdots,x_{i}\}$ where $i$ is the largest index $\le j-1 $ such that $x_i \in \g$. Now the part of  $\gm'$ contained in $\sigma$ and its neighbouring standard squares (which are also in $\Sigma(\g)$) is of length at least $\t$, therefore it has at least $\t/2r$ points $\in \Pi$ contained in these squares. Since the total number of such squares (including $\sigma$) $\le 9$, we have $\t/2r$ points  $\in \Pi$ contained in $\le 9$ squares in $\Sigma(\g)$. Therefore at least one square $\sigma'$ in $\Sigma(\g)$ has at least $\t/18r \ge k$ points of $\gm$. Let $\g=\{X_i\}_{i=1}^N$ and let $\sigma'$ be a neighbour of $X_j \in \g$. We define a new lattice path $\g' \in \Xi$  by $\g'=\{ X_1,X_2\cdots,X_N,X_{N-1},X_{N-2}\cdots,X_j,\sigma \}$, that is, by backtracking along $\g$ until we reach $X_j$ and then appending $\sigma$ at the end. Clearly, $S(\g') \supset S(\g)$ as a proper subset, contradicting the maximality of $\g$. 

Since the procedure described above must terminate after finitely many steps because $B_{L\t}$ is a compact set, a maximal element $\g$ of $\Xi$ must connect 0 to $B_{L\t}$. Such a lattice path may  not be non-repeating. However, we can erase the loops in $\g$ in the chronological order to obtain a non-repeating lattice path of the desired kind that connects 0 to $B_{L\t}$.     
\end{proof}

In the next theorem, we provide some general conditions under which there exists a non-trivial critical radius for the Boolean percolation model

\begin{theorem}
\label{tech}
 Let $\Pi$ be a translation invariant and ergodic point process with the property that for any connected set $\g$ of $L$ standard squares (with base length $\theta$)  the following are true:
\begin{itemize}
\item (i)For large enough $\t$, we have \[\P \l[ \g \text{ contains no points } \in \Pi \r] \le \exp\l( -c_1(\theta)L \r)\] with $c_1(\theta) \to \infty$ as $\theta \to \infty$ \newline
\item (ii)For large enough $\t$, we have \[\P \l[ \text{ Each standard square in } \g \text{ has at least k points } \in \Pi  \r] \le  \exp \l( -c_2(\theta,k)L  \r) \] with $\lim_{\t \to \infty} \lim_{k \to \infty} c_2(\theta,k) = \infty $. 
\end{itemize}
In the Boolean percolation model $X(\Pi,r)$ on $\Pi$, let $r$ denote the radius of each disk and let $\L(r)$ denote the number of infinite clusters. Then there exists $0<r_c <\infty$ such that for $0 < r < r_c$, we have $\L(r)=0$ a.s. and for $r_c<r<\infty$ we have $\L(r)>0$ a.s. 
\end{theorem}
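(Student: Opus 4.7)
The plan is to treat the two inequalities $r < r_c$ and $r > r_c$ separately, using Proposition \ref{discr1} to transfer percolation from the discretized picture to the continuum for large $r$, and Proposition \ref{discr2} to transfer non-percolation from the continuum to the discretized picture for small $r$. By the coupling in the subcritical/supercritical regimes is monotone in $r$, it suffices to exhibit one small radius at which $\L(r)=0$ a.s.\ and one large radius at which $\L(r)\ge 1$ with positive probability (which upgrades to a.s.\ by ergodicity); then $r_c$ is defined as the infimum of radii at which an infinite cluster exists.

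For the supercritical direction, fix a base length $\t$ large enough that $c_1(\t) > \log 9$ (possible by hypothesis (i)), and take $r = \sqrt{5}\,\t$. Call a standard square \emph{vacant} if it contains no point of $\Pi$, \emph{occupied} otherwise. A standard planar duality argument shows that if the occupied cluster in the discretized picture containing the square at the origin is finite, then it is surrounded by a \emph{$\ast$-connected} circuit of vacant squares; the number of such circuits of combinatorial length $L$ surrounding the origin is bounded by $C L\cdot 8^L$. By hypothesis (i), the probability that a given such circuit is entirely vacant is at most $\exp(-c_1(\t)L)$, so summing in $L$ gives a finite expected number of vacant surrounding circuits, and the expected number of circuits of length $\ge L_0$ can be made arbitrarily small by taking $L_0$ large. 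Combined with the positive probability that the origin's square is itself occupied, this shows that with positive probability there is an infinite occupied lattice cluster through the origin. Proposition \ref{discr1} (with $\t = r/\sqrt{5}$) then produces continuum paths from $0$ to $W_{L\t}$ for every $L$, hence an infinite continuum cluster with positive probability; by ergodicity, $\L(r)\ge 1$ a.s.

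For the subcritical direction, use hypothesis (ii) to choose $\t$ large, then $k$ large, so that $c_2(\t,k) > 2\log 8$; then pick any $r < \t/(18k)$. Suppose toward a contradiction that there is an infinite continuum cluster with positive probability. By translation invariance and a union bound, the probability that the origin is connected through the Boolean model to arbitrarily distant $W_{L\t}$ is then positive for every $L$. By Proposition \ref{discr2}, on this event there exists a non-repeating lattice path $\g$ of combinatorial length $\ge L$ starting at the square containing the origin, with each square of $\g$ containing at least $k$ points of $\Pi$. The number of non-repeating lattice paths of length $L$ starting at a fixed square is at most $8^L$, and each individual such path satisfies the $k$-fullness condition with probability at most $\exp(-c_2(\t,k)L)$ by hypothesis (ii). Summing over $L\ge L_0$ gives a bound tending to $0$ as $L_0\to\infty$, contradicting the assumed positive probability of arbitrarily long such paths. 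Therefore $\L(r)=0$ a.s.\ at this radius, and by monotonicity the same holds for all smaller $r$, so $r_c>0$.

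The main obstacle is ensuring that the entropy of the relevant combinatorial objects (surrounding $\ast$-circuits in one direction, non-repeating lattice paths in the other) is dominated by the exponential decay supplied by the hypotheses. Both cases reduce to a Peierls-type comparison of $\log 8$ (or a small polynomial multiple thereof) against $c_1(\t)$, respectively $c_2(\t,k)$; the assumed divergences $c_1(\t)\to\infty$ and $\lim_{\t}\lim_{k} c_2(\t,k)=\infty$ are exactly what is needed to make this comparison, and the interplay between $\t,k$ and the admissible $r$ in Proposition \ref{discr2} is the only place where one must be careful about the order of limits.
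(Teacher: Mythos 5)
Your proposal is correct and follows essentially the same route as the paper: both directions are Peierls-type counting arguments, with Proposition \ref{discr1} carrying occupied lattice percolation into the continuum for large $r$, Proposition \ref{discr2} carrying continuum percolation into $k$-full lattice percolation for small $r$, and the divergence hypotheses on $c_1$ and $c_2$ used to beat the entropy of circuits/paths. The one spot worth spelling out is your final step in the supercritical case — "combined with the positive probability that the origin's square is itself occupied" — which requires $\P(\text{origin's square occupied}) > \P(\text{some vacant circuit surrounds origin})$; this follows since hypothesis (i) with $L=1$ forces the former toward $1$ and the circuit sum toward $0$ as $\t\to\infty$, but as written it is an assertion rather than an inequality (the paper has the same implicit gap, phrased instead as "the probability of an empty circuit is $<1$").
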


\begin{proof}
The proof follows a Peierl's type argument from the classical bond percolation theory, after appropriate discretization using Propositions \ref{discr1} and \ref{discr2}.
We first note that  by translation invariance, it suffices to show that $\P \l[ 0 \text{ is connected to } \infty \text{ with radius } r \r]>0 \text{ or } =0$ respectively in order to show that $\L(r)>0 \text{ or }=0$  a.s.

We want to show that for small enough $r$, there is no continuum path connecting 0 to $\infty$.  Consider possible base lengths $\theta$ so large that our hypothesis (ii) is valid. Fix base length $\t$ and $k$  a positive integer large enough such that $2\log 3 - c_2(\t,k) <0$ where $c_2$ is as in (ii). We call a non-repeating lattice path $\g$ to be $k-$full if each standard square in $\g$ contains $\ge k$ points $\in \Pi$.    By condition (ii), if there are $L$ distinct standard squares in $\g$, then the probability of $\g$ being k-full $\le \exp \l( -c_2(\t,k)L  \r) $. Since each  standard square has $\le 9$ neighbours, therefore the number of non-repeating lattice paths $\g$ containing 0 and  having $L$ standard squares  $\le 9^L$. So, \[\P \bigg[ \text{ There is a k-full lattice path of length } L \text{ containing the origin }\bigg] \le \exp \bigg((2\log 3 -c_2(\t,k))L  \bigg)\] 
The right hand side is summable in positive integers $L$, hence by Borell-Cantelli lemma, \[ \P \bigg[ \text{ There exists a k-full lattice path connecting the origin to } B_{L\t} \text{ for all } L \in \Z_{+} \bigg] =0\]
If there was a continuum path $\gm$ connecting 0 to $\infty$, then for any integer $t>0$ there will be a continuum path connecting 0 to $B_{t\t}$.
We now appeal to Proposition  \ref{discr2} for this $k$ and find an $r$ small enough such that  for any continuum path $\gm$ connecting 0 to the box $B_{t\t}$ we can find a $k$-full lattice path $\g$ connecting 0 to $B_{t\t}$. But we have already seen that a.s. there are only finitely many k-full lattice paths, which gives us a contradiction, and proves that there is no continuum path connecting 0 to $\infty$.

By translation invariance, this proves that for small enough $r$, we have $\L(r)=0$ a.s.

Next, we want to show that for large enough $r$,  with positive probability there exists a continuum path connecting 0 to $\infty$. Fix a radius $r$ in the Boolean model. The event that there exists no continuum path from 0 to $\infty$, implies by Proposition \ref{discr1} that (choosing the base length to be $\t$ as in Proposition \ref{discr1} with $\t > 2r$) there exists $L\in \Z_{+}$ such that there is no lattice path connecting the origin to $B_{L\t}$. The last statement implies that there exists a circuit of standard squares surrounding the origin such that the interiors of the standard squares in this circuit do not contain any point from $\Pi$. Therefore, it suffices to prove that the probability of this event can be made  $<1$ by choosing $r$ sufficiently large.

To this end, we recall that the number of  circuits of standard squares containing the origin and consisting of $L$ distinct standard squares is $ \exp \l( cL \r)$ for some constant $c>0$. For details on this, we refer the reader to \cite{BoRi}, Chapter 1, proof of Lemma 2. 

The probability that a specific circuit of standard squares surrounding the origin and containing $L$ standard squares is empty $\le \exp \l( -c_1(\t)L  \r)$ when base length is $\t$, which follows from condition (i) in the present theorem. Therefore, \begin{equation} \label{circuit} \P \bigg[ \text{ There exists an empty circuit surrounding the origin }  \bigg] \le \sum_{L=1}^{\infty}e^{c(d)L} e^{-c_1(\t)L} \end{equation} Now, by choosing $r$ large enough, we can make $\t$ large enough (by Proposition \ref{discr1}), so that condition (i) would imply that the right hand side of (\ref{circuit}) is less than 1. This completes the proof that when $r$ is large enough, 0 is connected to $\infty$ with positive probability. 
\end{proof}

\section{Critical radius for Gaussian zeros}
\label{gafzeros}
In this section we aim to study the Boolean model on the planar GAF zero process. First of all, we will prove an estimate on hole probabilities and overcrowding probabilities in $\F$, which is taken up in Section \ref{hprob}. It will subsequently be used to prove the existence of critical radius for the Boolean percolation model on Gaussian zeroes in Section \ref{gafperc}. 

\subsection{Exponential decay of hole and overcrowding probabilities}
\label{hprob}

The main goal of this section is to prove the following estimate on the hole and overcrowding probabilities of connected sets composed of standard squares:  

\begin{theorem}
\label{gafest}
 Let $\g$ be a  connected set composed of $L$ standard squares of side length $\t$. Let $E_k$ and $F_k$ denote the events that  that each standard square in $\g$ has no zeroes and has $ \ge k$ zeroes respectively. Then, for $\t$ bigger than a constant, we have  :\\
\[(i) \P \l[ E_k \r] \le \exp \l(-c_1(\t)L \r) \hspace{1 cm}  (ii) \P \l[ F_k \r] \le \exp\l(-c_2(\t,k)L\r)\]
where  $c_1(\t) \to \infty$ as $\t \to \infty$ and $\lim_{\t \to \infty} \lim_{k \to \infty}c_2(\t,k) = \infty$
\end{theorem}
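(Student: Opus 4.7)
The strategy is to reduce the hole (resp.\ overcrowding) estimate on an arbitrary connected set of $L$ squares to a product of single-square estimates across a carefully chosen sub-collection of well-separated squares. The plan has three ingredients: (a) a combinatorial extraction of a well-separated sub-family, (b) sharp single-square hole and overcrowding bounds, and (c) a decoupling / almost-independence lemma for the GAF across well-separated regions. Translation invariance, ergodicity, and Nishry's sharp asymptotics for disk holes enter only as inputs.

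First, from the connected collection $\g$ of $L$ standard squares I would greedily extract a sub-family $\g'\subset \g$ of squares that are pairwise at $L^\infty$-distance at least $D\t$, where $D$ is a large integer to be fixed below. A standard packing argument (each chosen square blocks at most $(2D+1)^2$ other squares) gives $|\g'|\ge L/(2D+1)^2$, so up to a constant factor nothing is lost: the event $E_k$ (resp.\ $F_k$) on $\g$ forces the corresponding event on every square in $\g'$. For a single $\t\times\t$ square $S$, inscribing a disk of radius $\t/2$ and applying the Sodin--Tsirelson--Nishry hole bound $-\log \P[B(0;R)\cap \F=\emptyset]\asymp R^4$ gives $\P[S\cap \F=\emptyset]\le \exp(-c_0\t^4)$ for $\t$ large. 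For overcrowding, the analogous Krishnapur / Nazarov--Sodin--Volberg estimate yields $\P[|S\cap \F|\ge k]\le \exp(-c_0 k\log k)$ once $k$ is large compared with $\E[|S\cap \F|]\asymp \t^2$. These deliver the right orders $c_1(\t)\asymp \t^4$ and $c_2(\t,k)\to\infty$ as $k\to\infty$ at fixed $\t$.

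The crucial step is to use the almost-independence of the GAF to decouple the events across $\g'$. The mechanism is the Gaussian decay of the normalized covariance, $e^{-(|z|^2+|w|^2)/2}|K(z,w)|=e^{-|z-w|^2/2}$, which makes the restrictions of $f$ to two regions at mutual $L^\infty$-distance $D\t$ differ from independent copies only by a Gaussian error of size $e^{-cD^2\t^2}$. Iterating this in a hierarchical (Cantor-style) order on the squares of $\g'$, at each step conditioning on $f$ outside a dilated neighbourhood of the currently exposed square and coupling to an independent Gaussian piece supported there, one obtains
\[\P\!\bigg[\bigcap_{S\in\g'}A_S\bigg]\;\le\;\prod_{S\in\g'}\P[A_S]\cdot (1+\eta(D,\t))^{|\g'|},\]
where $A_S$ is the hole or overcrowding event on $S$ and $\eta(D,\t)$ can be made arbitrarily small by choosing $D$ large. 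Combining with the single-square bounds yields both (i) and (ii).

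The main obstacle I expect is making the decoupling step quantitative enough: the error at each step involves $f$ outside a growing neighbourhood, so the hierarchical order must be chosen so that each newly exposed square lies outside the influence region of all previously exposed squares. This is exactly what the Cantor-type tree decomposition buys us, rather than a flat enumeration that would compound the conditioning regions uncontrollably. One then has to balance the choice of $D=D(\t)$ so that $c_1(\t)\asymp c_0\t^4/(2D+1)^2$ still diverges while $|\g'|\log(1+\eta)$ stays negligible; since $\eta\lesssim e^{-cD^2\t^2}$ even modest $D$ suffices. For (ii) the limit $k\to\infty$ is taken after $\t$ is fixed, so that the single-square overcrowding tail dominates the constant cost from packing and decoupling.
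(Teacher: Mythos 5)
Your overall strategy is in the right spirit---extract a sub-family of well-separated squares, prove single-square bounds, and decouple across the sub-family via the Nazarov--Sodin almost-independence theorem (Theorem \ref{AI} of the paper)---and your single-square inputs (the $e^{-c\t^4}$ hole estimate and the $e^{-ck\log k}$ overcrowding estimate) are correct. But the crucial decoupling step is where your argument has a genuine gap, and this is precisely the place where the paper's proof has to work hardest.

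The claimed inequality
\[
\P\Bigl[\bigcap_{S\in\g'}A_S\Bigr]\;\le\;\prod_{S\in\g'}\P[A_S]\cdot\bigl(1+\eta(D,\t)\bigr)^{|\g'|}
\]
is asserted but not justified, and I do not see how to obtain it by your proposed mechanism. Theorem \ref{AI} gives a \emph{decomposition} $f^*=f_S^*+G_S^*$ on $S$ with the $f_S^*$ independent, but the error fields $G_S^*$ are \emph{not} independent across $S$; they all come from the same ambient GAF. To pass from $A_S$ (an event for $f^*$) to the corresponding event $A_S'$ for the independent copy $f_S^*$ you need, on each $S$, both (a) $\max_S|G_S^*|$ small and (b) $|f_S^*|$ bounded below on $\partial S$ (to apply Rouch\'e). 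Event (b) is independent across $S$, but event (a) is not. A union bound over the failures of (a) gives $\P[\text{some }G_S^*\text{ large}]\lesssim |\g'|\cdot C e^{-e^{\rho^2}}$, which is a constant times $L$; for $L$ large this \emph{grows}, so it cannot be absorbed into the target $e^{-c_1(\t)L}$. The flat extraction with constant spacing $D\t$ therefore does not close. Increasing the spacing to $\sim\t\log L$ makes the failure term beat $L$ but reduces $|\g'|$ to $\sim L/(\log L)^2$, which no longer yields a bound of the form $e^{-c_1(\t)L}$. Moreover, your ``sequential conditioning'' reading of the hierarchy---conditioning on $f$ outside a neighbourhood of each newly exposed square---is not supported by Theorem \ref{AI}, which is a simultaneous decomposition statement, not a conditional one; and conditional distributions of the GAF given its values outside a region are quite rigid (number and sum of zeros fixed), not approximately independent.

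The paper gets around exactly this difficulty, and the hierarchical Cantor construction is not merely a bookkeeping device for the exposure order: it creates \emph{partial independence among the error events}. At each level $j$ of the tree, the errors attached to children of distinct level-$(j-1)$ nodes are genuine functions of independent level-$(j-1)$ approximants, hence independent; this lets one run a Chernoff-type large deviation bound on the \emph{number} of bad nodes at each level (with carefully chosen thresholds $x_j=4^{-(N-j+1)}$), producing a bad event $\Omega$ of probability $\le e^{-c(\t)L^2}$. Crucially one does \emph{not} require all squares to be good; outside $\Omega$ a fixed fraction $\kappa$ of the $4^N\asymp L^2$ final components are good, and a translation-averaging argument guarantees that some translate of the surviving Cantor set covers a $\kappa$-fraction of $\g$. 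Finally a union bound over the $\le 2^L$ possible good sub-families, together with the per-square factor $\le e^{-c(\t)}$, gives the stated decay. The geometrically-shrinking removal widths $h(L/2^j)=2A\log(L/2^j)$ are what make the surviving fraction $\Theta(1)$ rather than $1/(\log L)^2$, while still supplying separations $2A\rho_j$ compatible with Theorem \ref{AI} at every level.

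In short: your greedy extraction is a clean alternative to the paper's translation-averaging for locating a positive-density sub-family of $\g$, but with constant spacing it has no hierarchical independence structure, and the decoupling inequality you wrote down is not a theorem. To make your proposal rigorous you would need to either (i) build the Cantor tree into the extraction so that the error events can be grouped into independent families and controlled by large deviations, as the paper does, or (ii) find a genuinely different argument bounding the probability that a positive fraction of the $G_S^*$ are simultaneously small, without independence---and I do not see such an argument.
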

We will  perform a certain Cantor type construction which will be used in proving Theorem \ref{gafest}. For the rest of this section, the symbol ``$\log$'' denotes logarithm to the base 2. We will first perform the construction for a straight line, and then take a product of the construction along the two axes which will result in a similar construction for a square.

We consider the normalized gaf $f^*(z)=e^{-\frac{1}{2}|z|^2}f(z)$. We will make use of the following almost independence theorem from [NS]:
\begin{theorem}
 \label{AI}
Let $F$ be a  GAF. There exists numerical constant $A>1$ with the following property. Given a family of compact sets $K_j$ in $\C$ with diameters $d(K_j)$, let $\rho_j \ge \sqrt{\log(3 + d(K_j))}$. Suppose that $A\rho_j$-neighbourhoods of the sets $K_j$ are pairwise disjoint. Then \[F^*=F_j^*+G_j^* \text{ on } K_j \] where $F_j$ are independent GAFs and for a positive numerical constant $C$ we have
\[\P \l\{ \max_{K_j} |G_j^*| \ge e^{-\rho_j^2} \r\} \le C\exp[-e^{\rho_j^2}] \]
\end{theorem}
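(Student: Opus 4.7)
The plan is to follow the Nazarov--Sodin scheme: decompose $F$ using its local Taylor expansions around well-separated centers $z_j\in K_j$, decorrelate the resulting Gaussian coefficient vectors across $j$ by a small Gaussian perturbation, and control the residual uniformly on each $K_j$. The key structural input is that the normalized GAF $F^*$ is stationary up to a phase, i.e.\ $e^{-z\bar{z_j}-|z_j|^2/2}F(z+z_j)$ is again a standard planar GAF, which follows at once from the covariance invariance of $K(z,w)=e^{z\bar w}$.

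First, for each $j$ choose a center $z_j\in K_j$ and re-expand
\[ F(z) \;=\; e^{z\bar{z_j}}\sum_{k=0}^\infty a_k^{(j)}\,\frac{(z-z_j)^k}{\sqrt{k!}}, \]
where, for each fixed $j$, $(a_k^{(j)})_{k\ge 0}$ is i.i.d.\ standard complex Gaussian; the sequences indexed by different $j$ are jointly complex Gaussian with cross-covariances computable from $K$. Truncate at $N_j\asymp e^{\rho_j^2}$ and let $T_j(z)=e^{z\bar{z_j}}\sum_{k>N_j}a_k^{(j)}(z-z_j)^k/\sqrt{k!}$ denote the tail. On $K_j$ we have $|z-z_j|\le d(K_j)$, and the hypothesis $\rho_j\ge\sqrt{\log(3+d(K_j))}$ forces $d(K_j)^2\ll N_j$, so Stirling makes $d(K_j)^k/\sqrt{k!}$ decay super-exponentially for $k>N_j$. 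Combining this with sub-Gaussian concentration for $\sum_{k>N_j}|a_k^{(j)}|^2$ and passing from pointwise to uniform bounds via an $\varepsilon$-net on $K_j$ of cardinality polynomial in $d(K_j)$ yields $\P\bigl[\sup_{K_j}|T_j^*|\ge\tfrac12 e^{-\rho_j^2}\bigr]\le C\exp(-e^{\rho_j^2})$.

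Next, decorrelate the truncated coefficients. For $j\ne j'$, direct kernel computation gives that the normalized cross-covariances between the blocks $(a_k^{(j)})_{k\le N_j}$ and $(a_m^{(j')})_{m\le N_{j'}}$ are bounded by $\exp\bigl(-\tfrac12|z_j-z_{j'}|^2\bigr)$, which is at most $\exp(-cA^2\rho_j^2)$ by the separation hypothesis. Assemble all these coefficients into a single finite-dimensional vector $a$ with joint covariance $\Sigma=I+E$; provided $A$ is a sufficiently large absolute constant, $\|E\|$ is super-exponentially small in $\min_j\rho_j$ even after paying for the ambient dimension $\sum_j N_j$. Realize $a=\Sigma^{1/2}\tilde a$ with $\tilde a$ standard Gaussian. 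Then the blocks $(\tilde a_k^{(j)})_{k\le N_j}$ are genuinely independent across $j$, while the correction $(\Sigma^{1/2}-I)\tilde a$ has covariance of norm $O(\|E\|)$, so the associated analytic perturbation has $\sup_{K_j}$ inside the $e^{-\rho_j^2}$ window (again via the same $\varepsilon$-net) with probability at least $1-C\exp(-e^{\rho_j^2})$.

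Finally, define $F_j$ to be the GAF whose Taylor coefficients at $z_j$ are the decorrelated $\tilde a_k^{(j)}$ for $k\le N_j$ together with fresh independent standard complex Gaussians for $k>N_j$ (the fresh Gaussians drawn independently across $j$). Each $F_j$ is then a standard planar GAF, and the $(F_j)_j$ are independent by construction. On $K_j$ the residual $G_j^*:=F^*-F_j^*$ is the sum of the truncation tail $T_j^*$ and the decorrelation correction, both controlled above, so the desired bound $\sup_{K_j}|G_j^*|\le e^{-\rho_j^2}$ holds off an event of probability $\le C\exp(-e^{\rho_j^2})$. The main technical obstacle is the joint calibration of $A$, $N_j$, and the $\varepsilon$-net cardinalities so that the cross-covariance block (whose dimension grows with $\sum_j N_j\asymp\sum_j e^{\rho_j^2}$) remains a negligible perturbation while the Taylor tail remains super-exponentially small; the hypothesis $\rho_j\ge\sqrt{\log(3+d(K_j))}$ is exactly the margin needed so that both estimates survive the passage from pointwise to uniform bounds on $K_j$.
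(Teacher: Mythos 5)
You should first note that the paper itself gives no proof of Theorem \ref{AI}: it is quoted verbatim from [NS], where the argument runs through the Bargmann--Fock space, attaching to each $K_j$ the subspace spanned by reproducing kernels at points of a neighbourhood of $K_j$, using the off-diagonal decay $|K^*(z,w)|=e^{-|z-w|^2/2}$ to show these subspaces are almost orthogonal once the $A\rho_j$-neighbourhoods are disjoint, and then replacing the almost orthogonal family by an exactly orthogonal one to produce the independent $F_j$. So the comparison is with that scheme, and your Taylor-block substitute for it has a genuine gap.

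The gap is a quantitative incompatibility between your truncation and your decorrelation steps. First, the assertion that $\rho_j\ge\sqrt{\log(3+d(K_j))}$ forces $d(K_j)^2\ll N_j$ with $N_j\asymp e^{\rho_j^2}$ is false: the hypothesis only gives $e^{\rho_j^2}\ge 3+d(K_j)$, which may be of order $d(K_j)$, far below $d(K_j)^2$. Since the $k$-th Taylor term of a GAF at distance $r$ from the expansion centre has standard deviation $r^k/\sqrt{k!}$, which peaks at $k\approx r^2$, truncating at degree $N_j\ll d(K_j)^2$ leaves a normalized tail of order $1$ on $K_j$, not $e^{-\rho_j^2}$; so you are forced to take $N_j\gtrsim d(K_j)^2$. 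But then the cross-covariance bound collapses. The change of expansion centre is a unitary (displacement) operator on Fock space, so the full coefficient sequences at two centres are perfectly correlated; writing $\delta=z_j-z_{j'}$, the entries $\E\bigl[a_k^{(j)}\overline{a_m^{(j')}}\bigr]$ are small only while $k,m\ll|\delta|^2$. For instance $\bigl|\E\bigl[a_0^{(j)}\overline{a_m^{(j')}}\bigr]\bigr|=e^{-|\delta|^2/2}|\delta|^m/\sqrt{m!}\approx|\delta|^{-1/2}$ when $m\approx|\delta|^2$, only polynomially small, and by unitarity the operator norm of the cross block approaches $1$ once both truncation levels exceed $|\delta|^2$. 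Under the theorem's hypotheses the sets may have huge diameter $d$ while being separated by only about $A\sqrt{\log d}$, so $|\delta|^2$ can be of order $\log d$ while $N_j\gtrsim d^2$: the requirement $N_j\gtrsim d(K_j)^2$ (for the tail) and the requirement $N_j\ll|\delta|^2$ (for $\|E\|$ small, hence $\Sigma^{1/2}\approx I$) cannot hold simultaneously, so $\Sigma=I+E$ is not a near-identity and the decorrelation step fails. This regime --- large sets separated by only a logarithmic margin --- is precisely what the theorem is for, and it is why the Nazarov--Sodin proof works with subspaces attached to the sets $K_j$ themselves rather than with Taylor blocks at a single centre, whose region of influence is a disk of radius comparable to $d(K_j)$ and need not be at all separated from its neighbours'.
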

Define the function $h(x) = 2A  \log(x/2) $, where $A$ is as in Theorem \ref{AI}.

Our construction will be parametrized by two parameters: $\t >0$ and $0<\la<1$. We take  $\t$ to be large enough and $\la$ to be small enough; the exact conditions demanded of $\t$ and $\la$ will be described as we proceed along the construction. It turns out that the resulting choice of $\t$ and $\la$ can be made to be uniform in all the other variables in the construction (like the length $L$), and it suffices to take $\la$ smaller than a constant and $\t$ large enough, depending on $\la$. To begin with, we demand that $\t$ be so large that $C\exp[-e^{(\log \t)^2}]<1$, for $C$ as in Theorem \ref{AI}, and
\begin{equation}
\label{choice}
\sqrt{\log (3 +  x\t \sqrt{2})} < \t \log x  \text{ for all } x \ge 1.
\end{equation}

\subsubsection{A Cantor type construction: straight line}
\label{Cantor-st}

Let $\g_0$ be the straight line segment $[0,L\t]$ (or a horizontal translate thereof). An obvious analogue for this construction can be carried out for a similar line segment aligned along the vertical axis.

We start with $\g_0$. Let $\Delta_0$ be the line segment $\subset \g_0$ of  length $\t h(L)$ such that $\l(\Delta_0\r)^c \cap \g_0$ consists of two line segments $\el$ and $\rr$ of equal length $\l(L\t-\t h(L)\r)/2$, situated respectively to the left and right of $\Delta_0$. Set $\g_1^1=\el$, $\g_1^2=\rr$ and $\g_1=\g_1^1 \cup \g_1^2$ . We proceed inductively as follows. For $N=\lceil \log  \la L  \rceil > j \ge 1$ where $0<\la<1$ is to be fixed later, consider $\g_j^i, 1\le i \le 2^j$.  Let $\Delta^i_j$ be the segment $\subset \g_j^i$ of length $\t h(L/2^j)$ such that $\el_i,\rr_i \subset \g_j^i$ are  segments of equal length, located  to the left and right respectively of $\Delta^i_j$. Each of $\el_i,\rr_i$ has length $\l( \text{ Length }(\g_j^i) -\t h(L/2^j)\r)/2$. Doing this for each $1\le i \le 2^j$ we have a collection of $2^j$  pairs of line segments  $\el_i$ and $\rr_i$, with a natural ordering among them along the horizontal axis from the left to the right. Following this order, we denote these segments $\g_{j+1}^i, 1\le i \le 2^{j+1}$. Finally, we define $\g_{j+1}=\cup_{i=1}^{2^{j+1}} \g_{j+1}^i $. We call $\Delta_{j}=\cup_i \Delta^i_j$ as the ``removed'' portion in the $j$-th round and $\g_{j+1}$ to be the ``surviving'' portion after the $j$-th round. 

This completes the construction for  a straight line segment $\g_0$ of length $L\t$.
Before moving on, let us make some observations about the above construction. Recall that  $N=\lceil \log  \la L  \rceil$, so in the end there are $2^{ N}$ disjoint segments, and $\la L  \le 2^N \le 2 \la L $. The length of each $\g_j^i$ is clearly bounded above by $L\t/2^j$, and the length of  $\Delta_j$ is bounded above by $2^j\t h(L/2^j)$ We upper bound the total length of the removed portion $\cup_{j=1}^{N-1} \Delta_j$  by 
\[ \sum_{i=1}^N 2^{j-1} \cdot 2A \t \log(L/2^j) = 2A(2^{N}-1)\t \log L - 2A\t ( N-1)2^{N} -2A\t \] \[= 2^{N+1}A \t \l( \log L - N \r) -2A\t \log L + 2^{N+1}A\t -2A\t \le 4A\l(\la \log \frac{1}{\la}\r)L\t + 4A\la L \t \]
where in the last step we have used  
\begin{equation}
\label{choice1}
\la L \le 2^N \le 2 \la L.
\end{equation}
By choosing $\la $ small enough, we can ensure that the total length of the removed portion is  $\le \frac{1}{2} L \t$. 

In $\g_N$, each final surviving segment $\g_N^i$ is of equal length, and since the length of $\g_N \ge \frac{1}{2} L\t$, therefore each $\g_N^i$ is of horizontal length $ \ge \frac{1}{2^{N+1}} L\t \ge \frac{\t}{4 \la}$.  

Notice that $A\rho_j$-neighbourhoods of $\g_j^i$ are disjoint where $\rho_j=\t\log(L /2^j)$. 

\subsubsection{A Cantor type construction: square}
\label{Cantor-box}

We now describe a variant of the Cantor type  construction in Section \ref{Cantor-st} for  a square $B_0$  dimension $L\t \times L\t$.  For each round, we will describe the connected components surviving at the end of that round.

We begin by noting that $B_0=\g_{0,1} \times \g_{0,2}$ where $\g_{0,i}$ are straight lines of length $L\t$ along horizontal and vertical directions respectively.  We perform the construction for a straight line segment on each of $\g_{0,1},\g_{0,2}$ and let the surviving set at the end of round $j$ be denoted by $\g_{j,1}$ and $\g_{j,2}$ respectively. Then the surviving set at the end of round $j$ for $B_0$ is given by $B_j:= \g_{j,1} \times \g_{j,2}$. Now $B_j$ clearly contains $2^{2j}$ connected components (which are in fact squares of side length $\le \t L/2^j$); call these $B_j^{i}, 1\le i \le 2^{2j}$ numbering them in any order.  By the arguments in Section \ref{Cantor-st}, it is clear that the final set $B_N$ has area at least $\frac{1}{4} \t^2L^2$, has $4^N$ connected components which are squares of side $\ge \t/4\la$. For $\la$ smaller than some absolute constant, each of these connected components contains at least one standard square of side $\t$. Moreover \[L^2\text{ Dist}(B_j^i,B_j^{i'}) \ge L^{\infty}\text{ Dist}(B_j^i,B_j^{i'})\ge 2A\t\log(L/2^j) \text{ for all } i \neq i', \text{ for each }j. \]
Then with $\rho_j=\t \log(L/2^j)$, we have that the $A\rho_j$-neighbourhoods of the $B_j^i$-s are disjoint, and by choice of $\t$ in equation (\ref{choice}) we have $\rho_j \ge \sqrt{\log (3 + \text{ Diam } (B_j^i) )}$. In obtaining the last assertion, we use the fact that $\text{ Diam } (B_j^i)  \le \t L \sqrt{2}/2^j$, recall (\ref{choice}) and (\ref{choice1}) and choose $\la$ such that $\frac{1}{2 \la} \ge 1$.

\subsubsection{Functional decomposition in the Cantor construction}
\label{func}
We can consider the sets $B_j^i$ to be arranged in the form of a tree $\T$ of depth $N$ where each vertex has 4 children (except at depth $N$). The children of the vertex $B_j^i$ are the vertices $B_{j+1}^{i'}$  where $B_{j+1}^{i'}$ are obtained by applying the $j+1$-th level of the construction on $B_j^i$.

Corresponding to the tree $\T$, we can perform a decomposition of the normalized GAF  $f^*$ using Theorem \ref{AI} above. We start with $f^*$, which we also call $f_0^*$. We apply Theorem \ref{AI} to the compact sets $B_1^i,1\le i \le 4$ to obtain i.i.d. normalized GAF s $f_{1,i}^*$ and corresponding errors $g_{1,i}^*$. These are the functions corresponding to the first level of the tree. At the next level, we perform a similar decomposition on each $f_{1,i}^*$ to obtain $f_{2,j}^*$ and $g_{2,j}^*, 1\le j \le 4^2$. So, on $B_{2}^i$ we have $f^*=f_{2,i}^*+g_{2,i}^*+g_{1,i'}^*$ where $B_{2}^i \subset B_1^{i'}$  We continue this decomposition recursively until we reach level $N$ in $\T$. At level $N$ we have $f^*=f_{N,i}^*+G_{i}^* \text{ on }B_N^i, 1\le i \le 2^N$ where the $f_{N,i}^*$  i.i.d. normalized GAFs.  The $G_i^*$ are the cumulative errors given by $G_i^*=\sum_{k=1}^N g_{k,n(k,i)}^*$  where $n(k,i)$ are such that $B_N^i \subset B_{k}^{n(k,i)}$. The $g_{j,i}^*$-s are not independent. However, for any two distinct vertices $B_j^i,B_j^{i'}$ at the same level $j$ in $\T$, the errors corresponding to the descendants of $B_j^i$ and those of $B_j^{i'}$ are independent. Thus at level $j$, the $4^j$ functions $g_{j,i}$ can be grouped into $4^{j-1}$ groups $\J_{j,i''},1\le i'' \le 4^{j-1}$ (each group consisting of $4$ functions whose vertices have the same parent at level $j-1$ in $\T$). Thus, the index $i''$ in $\J_{j,i''}$ can be thought to be varying over the vertices in $\T$ at level $j-1$. Clearly, $\J_{j,i}$ and $\J_{j,i'}$ are independent sets of functions for $i \neq i'$. We call $\J_{j,i}$ to be ``good'' if each $g_{j,k} \in \J_{j,i}$ satisfies $\{ \max_{B_j^k} |g_{j,k}^*| \le e^{-\rho_j^2} \}$, otherwise we call it ``bad''. Recall from Theorem \ref{AI} (and a simple union bound) that $\P \{ \J_{j,i} \text{ is bad} \} \le C\exp[-e^{\rho_j^2} ]$.

Set $p_j=C\exp[-e^{\rho_j^2}]$ as above. Denote by $b_j$ the number of $\J_{j,i}$ at level $j$ which are not good.
By a simple large deviation bound, we have  \begin{equation}\label{ldp} \P(b_j >x_j \cdot  4^{j-1}) \le \exp \l( -4^{j-1} I_j \r) \end{equation} for any $0<x_j<1$ and  $I_j={x_j}\ln \frac{x_j}{p_j}+(1-x_j)\ln \frac{1-x_j}{1-p_j}$ (for reference, see \cite{DZ} Theorem 2.1.10).

We set $x_j=1/4^{N-j+1}$ whereas $ p_j=C\exp \l(- e^{\rho_j^2} \r)$, and \[ \rho_j=\t \log (x/2^j)=\rho_N + (N-j)\t  \] Further, $ \t \log \frac{1}{2 \la}  \le \rho_N \le \t \log \frac{1}{\la}$. Combining all these facts, we have \[-x_j\ln p_j =\l[\exp \l( \t^2 \l( \frac{1}{\t}\rho_N + (N-j) \r)^2 \r) - \ln C \r] \bigg/4^{N-j+1}\]  
By choosing $\t$ larger than and $\la$ smaller than certain absolute constants, we can make the numerator of the above expression $\ge 2\t 4^{2(N-j+1)} \text{ for all } N \ge 1 \text{ and } 1 \le j\le N$.  Since $|x_j|\le 1/4$, we have $|x_j \ln x_j| \le \frac{1}{4} \ln 4$. Also, $c_1 \le \l|(1-x_j) \ln \frac{1-x_j}{1-p_j} \r| \le c_2 \forall j$ for some positive constants $c_1$ and $c_2$. The upshot of all this is that by choosing $\t$ larger than a constant  we can make $I_j \ge \t 4^{N-j+1} \text{ for all } j$, where we recall that $I_j = {x_j}\ln \frac{x_j}{p_j}+(1-x_j)\ln \frac{1-x_j}{1-p_j} $.

Hence  we have 
\[ \P(b_j >x_j \cdot  4^{j-1}) \le \exp \l( -\t 4^{j-1}4^{N-j+1}\r)= \exp \l(- \t 4^N \r) \le \exp \l(-\t \la^2 L^2 \r) \] 

We denote by $\Omega$ the event $ \{ b_j \ge x_j \cdot  4^j \text{ for some } j  \le N   \}$.
By a union bound over $1 \le j \le N$, we have $\P (\Omega) \le N\exp \l( -{\t}{\la^2}L^2 \r) \le \exp \l( -c_2(\t)L^2  \r) $ when $\t$ is large enough, depending on $\la$.

We call $G_N^i$ to be ``good'' if each summand $g_{k,n(k,i)}^*$ in $G_i^*=\sum_{k=1}^N g_{k,n(k,i)}^*$ belong to good $\J$-s. Now, each bad $\J$ at level $j$ gives rise to $4^{N-j+1}$ bad $G_N^i$-s at level $N$. Outside the event $\Omega$, there are at most $x_j4^{j-1}$ bad $\J$-s at level $j$, leading to $x_j4^N$ bad $G_{i}^*$-s. But $\sum_{j=1}^Nx_j < 1/2$, hence except with probability $\le \exp \l( - c_2(\t) L^2\r) $, we have $\ge \frac{1}{2}4^N \ge \frac{1}{2} \la^2 L^2$ good $G_i^*$-s. For any good $G_i^*$, we have, $\t$ larger than and $\la$ smaller than  absolute constants,
\[
\label{smallerr}
\mathrm{sup}_{B_N^i}|G_i^*| \le \sum_{k=1}^N |g_{k,n(k,i)}^*| \le \sum_{k=1}^{N} e^{-\rho_k^2} \le 2 e^{-\rho_N^2} \le e^{-5\t^2}
\]
Let the final set of surviving connected square segments be denoted as $\u(B)$. 

\subsubsection{A variant of the Cantor type construction for a square}
\label{variant}
Here we will discuss a variant of the construction for the $L\t \times L\t$ square, which is as follows. We begin with a larger square $B'(L)$, each side of length $2L\t + 2 \lceil A \log L \rceil \t $. In the first step,  we remove the row and column corresponding to the central segment of length $2 \lceil A \log L \rceil \t $ on each side of the square, with $A$ as in Theorem \ref{AI}. This leaves us with four squares of side length $L\t$ each. We can parametrize the four $L\t \times L\t$ squares as $B_{00},B_{10},B_{01},B_{11}$, where an increase the first subscript denotes an increase in the horizontal co-ordinate of the centre of the square and an increase in the second subscript denotes the same for the vertical co-ordinate of the centre. We now repeat the construction for an $L\t \times L\t$ square for each of $B_{ij},0\le i,j \le 1$. Let $\u(B_{ij})$ denote the final surviving set for the construction on $B_{ij}$. The final surviving set  for $B'(L)$ is denoted $\u=\cup_{i,j=0}^{1}\u(B_{ij})$. The normalized GAF $f^*$ on $B'(L)$ has the analogous decomposition on similar lines as the $L\t \times L\t$ square case. Further, with probability $\le C\exp(-e^{(\t\log L)^2})$, all the errors $g^*$ due to the first step of the construction (from $B'(L)$ to $B_{ij}$) are $\le e^{-(\t\log L)^2}$. 

Hence, on similar lines to Section \ref{func}, we can conclude that for $\la$ smaller than an absolute constant and $\t$ large enough (depending on $\la$), in each configuration $\u(B_{ij})$ at least $\frac{1}{2}4^{N} \ge \frac{1}{2} \la^2 L^2$ surviving components are good, except on an event $\Omega$ with $ \P\l(\Omega \r) \le  \exp \l( -c(\t)L^2  \r) $. On each good component in the final surviving set, the accumulated error $G^*$ satisfies $\text{max}|G^*|\le e^{-5\t^2}$.

\subsubsection{Proof of Theorem \ref{gafest}}
\label{pfthm}
Suppose we have a connected set $\g$ of standard squares of base length $\t$ and consisting of $L$ standard squares. Then there is a square $B$ of side length $L\t$, consisting of $L^2$ standard squares of base length $\t$, such that $\g \subset B$. As in Section \ref{variant},  we form a square $B'(L)$ of side length $2L\t + 2\lceil A\log L \rceil \t$, consisting of standard squares, such that $B$ sits inside $B'(L)$ as the square $B_{11}$.  Let the final Cantor set on $B'(L)$ be $\u=\cup_{i,j=0}^{1}\u(B_{ij})$. Recall that the connected components of $\u$ are squares of side length $\ge \t/4\la$, and each such component contains at least one standard square of side $\t$. Moreover, any two the $\u(B_{ij})$s are isometrically isomorphic with each other under an appropriate horizontal and/or vertical translation by $(L+2\lceil A\log L \rceil) \t$. We select one such standard square from each connected component in $\u(B_{11})$, and in the other $\u(B_{ij})$s we select  those standard squares which correspond under the above isomorphisms to the ones chosen in $\u(B_{11})$. The resulting union of standard squares for each $B_{ij}$ will be denoted by $\u'(B_{ij})$ and we define $\u'=\cup \u'(B_{ij})$. So, Given any standard square $\sigma$ in $ \u'( B_{11})$, there are four isomorphic copies corresponding to  $\sigma$  in $\u'$ (under the translations mentioned above, one copy in each $\u'(B_{ij})$). 

Denote by $\u'+(m,n)$ the translate in $\R^2$ of the set $\u'$  by the vector $(m\t,n\t)$. Let $\I$ be the set of such translates of $\u'$ by $(m,n)$ in the range $0 \le m,n \le L+ 2\lceil A \log L \rceil$. Given any standard square $\sigma$ in $ \u'( B_{11})$, there are four isomorphic copies corresponding  $\sigma$ in $\u'$  and it is easy to check under the action of $\I$, any given square $\in \g$ is covered by exactly one of these. The total number of such $\sigma$ (not counting  isomorphic copies) is $ 4^N \ge \la^2 L^2$, while $|\I|=(2L + 2\lceil A \log L \rceil)^2$. Hence, choosing a translate from $\I$ uniformly at random, the probability that a particular standard square in $ \g$ is covered $\ge \la^2 L^2/(2L + 2\lceil A \log L \rceil)^2 \ge \kappa= \kappa(\la)>0$.  Hence the expected fraction of $\g$ covered by a random translate of $\u'$ is also $\ge \k$. This implies that there exists a translate $T(\u')$ of $\u'$ (chosen from $\I$) such that it covers at least a fraction $\kappa$ of the standard squares in $\g$. The same translation gives rise to a translate $T(\u)$ of $\u$. 

We call a constituent standard square of $T(\u')$ to be ``good'' if the corresponding $G_{k}^*$ in $T(\u(B_{ij}))$ (from the Cantor type construction applied to the square $T(B'(L))$) is good as defined in Section \ref{func}. But in each $T(\u(B_{ij}))$, we have  that except on an event $\Omega$ such that $\P(\Omega)<e^{-c(\t)L^2}$, we have at least $\frac{1}{2}$ of the  $G_k^*$-s to be good. Let $\g(i),1\le i \le L$ denote the standard squares in $\g$. Call a standard square to be ``empty'' or ``full'' according as  it contains respectively 0 or $\ge k$ points in $\F$.  Call $\g$ ``empty'' or ``full'' if all standard squares in $\g$ are empty or full. In what follows, we treat the state ``empty'', but in all steps it can be replaced by the state ``full''. 

We observe that \[ \{ \g \text{ is empty }  \} \subset \Omega \cup \{ \text{ Some subset of } \lfloor \k L \rfloor \text{ standard squares in } T(\u') \cap \g \text{ are good and empty } \} \]

We have, via a union bound,
\[\P(\g \text{ is empty} ) \le \P(\Omega) + \sum_{\mathcal{S}} \P \l( \bigcap_{\{ \g_{i_k} \} \in \mathcal{S}}   \{ \g_{i_k}  \text{ is empty and good }   \} \r) \]     
where the last summation is over $\mathcal{S}$ which is the collection of all possible subsets $\{ \g_{i_k} \}$ of $\lfloor \k L \rfloor$ standard squares in $\g$ such that $\g_{i_k} \in T(\u') \text{ for all } k$.
Since there are at most $2^L$ such subsets, it suffices to show that for any fixed  $\{ \g_{i_k}\} \in \mathcal{S}$, we have for large enough $\t$
\begin{equation} \label{target1} \P \l( \bigcap_{\{ \g_{i_k} \} \in \mathcal{S}}   \{ \g_{i_k}  \text{ is empty and good }   \} \r) \le \exp \l( -c_1(\t)L \r) 
\end{equation} 
\begin{equation}
 \label{target2}
\P \l( \bigcap_{\{ \g_{i_k} \} \in \mathcal{S}}   \{ \g_{i_k}  \text{ is full and good }   \} \r) \le \exp \l( -c_2(\t,k)L \r)
\end{equation}

where $c_1(\t) \to \infty$ as $\t \to \infty$ and $\lim_{\t \to \infty} \lim_{k \to \infty}c_2(\t,k)=\infty$.

Let $A_{i_k}$ denote the event that $   \{ \g_{i_k}  \text{ is empty and good } \} $. Recall that $\g_{i_k}$ being empty implies that $f^*|_{\g_{i_k}}$ does not have any zeros, and $\g_{i_k}$ being good implies that $\max_{\g_{i_k}} |G_{i_k}^*| \le e^{-c\t^2}$, where $G_{i_k}^*$ are the cumulative errors in the cantor set construction, as estimated in Section \ref{func}. 

Define $A_{i_k}'$ to be the event that $f_{i_k}^*|_{\g_{i_k}}$ does not have any zeros. Here $f_{i_k}^*$ are the final independent normalized GAFs obtained in the Cantor set construction. Clearly, the events $A_{i_k}'$ are independent.

We will show that $A_{i_k} \subset A_{i_k}' \cup \Omega_{i_k}$, where the  $\Omega_{i_k}$-s are independent events with  $\P(\Omega_{i_k})<e^{-c\t}$. To this end, we note that on $\g_{i_k}$, we have $f^*=f_{i_k}^*+G_{i_k}^*$, and also $\max_{\g_{i_k}} |G_{i_k}^*| \le e^{-5\t^2}$. Applying Corollary \ref{estim2} to the square $\g_{i_k}$, we deduce that except for a bad event $\Omega_{i_k}$ of probability $\le e^{-c\t}$, we have $|f_{i_k}^*|>e^{-5\t^2}$ on $\partial \g_{i_k}$. Hence the equation $f^*=f_{i_k}^*+G_{i_k}^*$ on $\g_{i_k}$ along with Rouche's theorem implies that $f^*$ and $f_{i_k}^*$ have the same number of zeros in $\g_{i_k}$. So, on $A_{i_k} \cap \Omega_{i_k}^c$ we have that $A_{i_k}'$ holds, in other words $A_{i_k} \subset A_{i_k}' \cup \Omega_{i_k}$, as desired. The $\Omega_{i_k}$-s are independent since $\Omega_{i_k}$ is defined in terms of $f^*_{i_k}$ which are independent normalized GAFs.

Therefore we can write, for a fixed $\{\g_{i_k}\} \in \mathcal{S}$
\[\P \l( \bigcap_{k} A_{i_k} \r) \le \P \l( \bigcap_{k} \l( A_{i_k}' \cup \Omega_{i_k} \r) \r) \le \prod_{k} \P \l( A_{i_k}' \cup \Omega_{i_k} \r) \] 
But it is not hard to see that for the state ``empty'' we have $\P \l( A_{i_k}' \cup \Omega_{i_k} \r) \le \P\l( A_{i_k}' \r) + \P\l( \Omega_{i_k} \r) \le e^{-c(\t)}$ where $c(\t) \to \infty$ as $\t \to \infty$. It is also easy to see that if we consider the state ``full'' instead of ``empty'' $\P( A_{i_k}') \le e^{-c(\t,k)}$ where $c(\t,k) \to \infty$ as $k \to \infty$ for fixed $\t$, and $\P(\Omega_{i_k}) \le e^{-c\t}$. Therefore we have $\P \l( A_{i_k}' \cup \Omega_{i_k} \r) \le \exp \l( -c_2(\t,k) \r)$ where $\lim_{\t \to \infty} \lim_{k \to \infty}c_2(\t,k) = \infty$. This proves equations (\ref{target1}) and (\ref{target2})  and hence completes the proof of the theorem. 

\subsubsection{Lower bound on the size of $f^*$ }
\label{lbound}
Our goal in this section is to establish that with large probability, the size of a normalized GAF on the perimeter of a circle (or a square) cannot be too small. Of course, there is a trade-off between the ``largeness'' of the probability and ``smallness'' of the GAF, depending on the radius of the circle or the side length of the square. Such estimates, along with Rouche's theorem, would be useful in replacing $f^*|_{\g_{i_k}}$ with the independent $f^*_{i_k}$ on ``good'' $\g_{i_k}$-s in Section \ref{pfthm}.

\begin{proposition}
\label{estim1}
Let us consider a disk $\D$ of radius $R>1$, and let $\nu>2$. Then \begin{equation} \label{disk} \P \l( |f^*(z)| \le e^{-\nu R^2} \text{ for some } z \in \partial \D \r) \le e^{-C(\nu)R} \end{equation} for a $C(\nu)>0$. Here $f^*(z)=e^{-\frac{1}{2}|z|^2}f(z)$ where $f$ is the standard planar GAF.   
\end{proposition}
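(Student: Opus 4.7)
The plan is a discretization-plus-union-bound argument. The starting observation is that $f^*(z)$ is, for each fixed $z$, a standard complex Gaussian: from $\E|f(z)|^2 = \sum_{k \ge 0} |z|^{2k}/k! = e^{|z|^2}$ one has $\E|f^*(z)|^2 = 1$, and hence
\[
\P\bigl(|f^*(z)| \le \epsilon\bigr) \;\le\; \epsilon^2 \qquad \text{for every } \epsilon > 0.
\]
For $\epsilon = 2 e^{-\nu R^2}$ this gives a per-point probability of $4 e^{-2\nu R^2}$. I will place $N$ equally spaced points $z_1, \ldots, z_N$ on $\partial \D$ at spacing $\delta = 2\pi R/N$, choosing $N$ so that, on a high-probability event, the oscillation of $f^*$ between consecutive grid points does not exceed $e^{-\nu R^2}$. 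Then $|f^*(z)| \le e^{-\nu R^2}$ somewhere on $\partial \D$ forces $|f^*(z_j)| \le 2 e^{-\nu R^2}$ at the nearest $z_j$, to which we apply the pointwise small-ball bound; a union bound over the grid closes the argument as soon as $N \cdot e^{-2\nu R^2}$ is smaller than $e^{-C(\nu)R}$.

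\textbf{Lipschitz bound for $f^*$.} The main technical step is to bound the oscillation of $f^*$, since $f^*$ itself is not holomorphic. Using the Wirtinger derivatives
\[
\partial_{\bar z} f^*(z) = -\tfrac{z}{2}\,f^*(z), \qquad \partial_z f^*(z) = -\tfrac{\bar z}{2}\,f^*(z) + e^{-|z|^2/2} f'(z),
\]
the real gradient satisfies $|\nabla f^*(z)| \le C\bigl(R\,|f^*(z)| + e^{-|z|^2/2} |f'(z)|\bigr)$ on $|z| = R$. Since $f'$ is genuinely holomorphic, Cauchy's integral formula on the circle $|w - z| = 1/R$ gives $|f'(z)| \le R \sup_{|w-z|=1/R}|f(w)|$, and because $|w|^2 - |z|^2 = O(1)$ on that contour the multiplier $e^{-|z|^2/2}$ converts this into $e^{-|z|^2/2}|f'(z)| \le C R \cdot M$, where $M := \sup_{|w| \le R + 1/R}|f^*(w)|$. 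Consequently $\sup_{|z|=R}|\nabla f^*(z)| \le \Lambda := C R\,M$. To control $M$ I will invoke Borell--TIS: the field $\{f^*(w)\}$ is centred Gaussian with $\sup \E|f^*|^2 = 1$, and the covariance computation yields the pseudo-metric bound $d(z,w) := (\E|f^*(z) - f^*(w)|^2)^{1/2} \le C|z-w|(1 + |w|)$, so Dudley's entropy integral gives $\E M = O(\sqrt{\log R})$. Combined with Gaussian concentration, $M \le R$ and hence $\Lambda \le C R^2$ outside an event of probability at most $e^{-c R^2}$.

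\textbf{Completion.} On the regular event, take $N = \lceil 4\pi R \,\Lambda\, e^{\nu R^2} \rceil = O(R^3 e^{\nu R^2})$, so that $\delta \Lambda \le e^{-\nu R^2}$. Union-bounding the Gaussian small-ball estimate over the grid and adding the probability of the bad regularity event yields
\[
\P\bigl(\exists\, z \in \partial \D : |f^*(z)| \le e^{-\nu R^2}\bigr) \;\le\; e^{-c R^2} + 4 N e^{-2\nu R^2} \;\le\; e^{-c R^2} + C R^3 e^{-\nu R^2},
\]
which is bounded by $e^{-C(\nu) R}$ for $R$ larger than an absolute constant (the assumption $\nu > 2$ provides ample slack to absorb the polynomial-in-$R$ losses; for small $R$ the conclusion is trivial by shrinking $C(\nu)$). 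The only non-routine ingredient is the Lipschitz bound: the failure of holomorphy of $f^*$ is handled by splitting its real gradient into Wirtinger pieces and applying the classical Cauchy estimate to the holomorphic derivative $f'$ on a contour of radius $1/R$ -- this is the step I would expect to be the main obstacle, but the Wirtinger decomposition handles it cleanly.
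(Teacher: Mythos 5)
Your proof is correct, and the overall strategy — discretize $\partial\D$ at a grid of mesh $\asymp e^{-\nu R^2}$, apply the pointwise Gaussian small-ball estimate $\P(|f^*(z_j)| \le \epsilon) \le \epsilon^2$ at each grid point, control the oscillation between consecutive grid points on a high-probability regularity event, and union-bound — is exactly the structure the paper uses. The one place you diverge is the oscillation step, and the divergence is instructive. The paper does \emph{not} bound $|\nabla f^*|$: it observes that on the fixed circle $|z|=R$ the normalization factor $e^{-|z|^2/2}=e^{-R^2/2}$ is a constant, so the target inequality $|f^*| \ge e^{-\nu R^2}$ on $\partial\D$ is equivalent to $|f| \ge e^{-(\nu-1/2)R^2}$ there; it then bounds the oscillation of the genuinely holomorphic $f$ via the mean value theorem, after controlling $\sup_{\D}|f'|$ by a ready-made Gaussian sup estimate (Lemma 2.4.4 of \cite{HKPV}, applied to the GAF $f'(2Rz)$ with $\sigma_R^2 = 4R^2 e^{4R^2}$), which directly yields $\P(\sup_\D |f'| > R^{3/2}e^{2R^2}) \le 2e^{-R/32}$. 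You instead work with $f^*$ throughout: you split $\nabla f^*$ into Wirtinger pieces, handle $f'$ by a Cauchy estimate on a contour of radius $1/R$, and control $\sup|f^*|$ by Borell--TIS plus a Dudley entropy computation giving $\E\sup|f^*| = O(\sqrt{\log R})$. Both are sound; your route is somewhat more self-contained (it does not depend on the specific lemma from \cite{HKPV}) and more robust in that it never uses the constancy of $e^{-|z|^2/2}$ on the boundary, so it would adapt without modification to non-circular boundaries such as the square in Corollary \ref{estim2}, whereas the paper handles the square by passing to a circumscribing circle. The paper's route is shorter because it sidesteps the non-holomorphy of $f^*$ entirely via the constant-weight observation. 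One small presentational point: you define $N$ in terms of the random $\Lambda = CR\cdot M$; it is cleaner to fix $N$ deterministically from the event bound $\Lambda \le CR^2$ so that the grid itself is non-random, but this is easily repaired and your intent is clear.
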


\begin{proof}
First, we show that we can take $\D$ to be centred at the origin. Let $w$ be the centre of $\D$ and let $\D'$ be the disk of the same radius as $\D$ centred at the origin.  Then the random field $|f^*(z)|$ for $z \in \D$ can be re-parametrized as the random field $\exp \l( -\frac{1}{2}|z+w|^2 \r)|f(z+w)|$ for $z \in \D'$. The latter can be written as  $e^{-\frac{1}{2}|z|^2} \exp \l( -\Re (z\overline{w})-\frac{1}{2}|w|^2  \r)|f(z+w)|$. But it is clear from a  simple covariance computation that for any fixed $w$ the random fields $f(z)$ and $f_{w}(z)=\exp \l( -z\overline{w}-\frac{1}{2}|w|^2  \r)f(z+w)$  have the same distribution, and so do $|f(z)|$ and $|f_w(z)|=\exp \l( -\Re (z\overline{w})-\frac{1}{2}|w|^2  \r)|f(z+w)|$. In other words, the random field $\{\l|f^*(z) \r|\}_{z \in \C}$ has a translation-invariant distribution. Hereafter we assume that $\D$ is centred at the origin.

We want to show that $|f^*(z)|\ge e^{-\nu R^2}$, or equivalently, $|f(z)| \ge e^{-(\nu-1/2) R^2}$ on $\partial \D$ except on an event with probability exponentially small in $R$. We choose $\eta=\lceil 2\pi R^3 e^{(\nu +2 ) R^2} \rceil$ equi-spaced points $\{z_j\}_{j=1}^{\eta}$ on $\partial \D$. Then \[ \P \l( |f^*(z_j)| \le 2e^{-\nu R^2} \r) \le ce^{-2\nu R^2} \text{ for all } j\] since $f^*(z)$ is a standard complex Gaussian  for each fixed $z$. Consider the event   $\Omega_1= |f^*(z_j)| > 2e^{-\nu R^2} \text{ for all } j \le N$. By a union bound over the $\{z_j \}$-s we have   $\P(\Omega_1^c )\le ce^{-(\nu-2) R^2} $ for some $c(\nu)>0$. 

$f'(z)$ is a centred analytic Gaussian process on $\C$ with covariance  $\mathrm{Cov}(f'(z),f'(w))=z\overline{w}e^{z\overline{w}}$. Set $\sigma_{R}^2=\text{max}_{2 \cdot \D}\mathrm{Var}\l(f'(z)\r)=4R^2e^{4R^2}$. We use Lemma 2.4.4 from \cite{HKPV} and apply it to the Gaussian analytic function $f'(2Rz)$. Using this, we obtain \[ \P \l( \text{Max}_{\D} |f'(z)|>t \r) \le 2\exp \l( -\frac{t^2}{8 \sigma_{R}^2} \r) \] 
Setting $t=R^{3/2}e^{2R^2}$ in the above we get \[ \P \l( \text{Max}_{\D} |f'(z)|>R^{3/2}e^{2R^2} \r) \le 2e^{ -\frac{1}{32 }R } \] Let  $\Omega_2$ denote the event $\{\text{Max}_{\D} |f'(z)| \le R^{3/2}e^{2R^2} \}$.
The distance between any two consecutive $z_j$-s is $\le 2\pi R/\eta \le R^{-2} e^{-(\nu+2) R^2}$. Hence, on $ \Omega_2$ we have, via the mean value theorem, $|f(z)-f(z_j)|<  R^{- 1/2} e^{-\nu R^2}$ for any point $z \in \partial \D$ where $z_j$ is the nearest point to $z$ among $\{z_j\}_{j=1}^{\eta}$. As a result, for $R>1$ we have on $\Omega_1 \cap \Omega_2$ \[|f(z)|\ge |f(z_j)|-|f(z) -f(z_j)|\ge 2e^{-(\nu -1/2) R^2} -  R^{-1/2} e^{-\nu R^2} \ge e^{-(\nu -1/2) R^2} \]
For $R>1$ we have $\P(\Omega_1 \cap \Omega_2) \ge 1 - e^{-C(\nu)R}$ for some constant $C(\nu)>0$, as desired.
\end{proof}

\begin{corollary}
 \label{estim2}
Let us consider a square $\Tau$ of side length $S>1$, and let $\nu>1$. Then \begin{equation} \label{disk} \P \l( |f^*(z)| \le e^{-\nu S^2} \text{ for some } z \in \partial \Tau \r) \le e^{-C(\nu)S} \end{equation} for some constant $C(\nu)>0$. Here $f^*(z)=e^{-\frac{1}{2}|z|^2}f(z)$ where $f$ is the standard planar GAF.   
\end{corollary}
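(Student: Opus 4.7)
The plan is to adapt the argument of Proposition \ref{estim1} verbatim, replacing the circle $\partial \D$ by the polygonal boundary $\partial \Tau$. The three ingredients carry over without change: (i) a reduction by translation-invariance of the random field $\{|f^*(z)|\}_{z\in\C}$, (ii) a union bound over an equi-spaced net on the boundary using pointwise complex-Gaussian tails, and (iii) an application of Lemma 2.4.4 of \cite{HKPV} to $f'$, together with the mean-value theorem to interpolate between net points. The one genuinely new feature is that $|z|$ is not constant along $\partial\Tau$: it ranges in $[S/2, S/\sqrt{2}]$ on a square of side $S$ centred at the origin, so the radial factor $e^{-|z|^2/2}$ that converts between $|f|$ and $|f^*|$ is not uniform. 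Since the net spacing will be taken exponentially small in $S^2$, however, the variation of this factor between a generic $z\in\partial\Tau$ and its nearest net point is negligible, and the structure of the proof is unaffected.

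Concretely, I would first invoke the translation-invariance of $\{|f^*(z)|\}_{z \in \C}$ shown in the proof of Proposition \ref{estim1} to place $\Tau$ at the origin, so that $\Tau \subset \b(0;S/\sqrt{2})$. Then I would distribute $\eta = \lceil c S^{5/2} e^{(\nu+1) S^2}\rceil$ equi-spaced points $\{z_j\}_{j=1}^\eta$ along $\partial \Tau$, so that consecutive points lie at distance $\le S^{-3/2} e^{-(\nu+1) S^2}$. Since $f^*(z_j)$ is a standard complex Gaussian for each $j$, $\P(|f^*(z_j)| \le C_0 e^{-\nu S^2}) \le C_0^2 e^{-2\nu S^2}$; a union bound then produces an event $\Omega_1$ of probability $\ge 1 - C_1 S^{5/2} e^{-(\nu-1) S^2}$ on which $|f^*(z_j)| > C_0 e^{-\nu S^2}$ simultaneously for all $j$, which is $\ge 1 - e^{-C(\nu) S}$ for $\nu > 1$ and $S > 1$.

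Next I would bound $\max_{\Tau}|f'(z)|$ using the covariance $\E[f'(z)\overline{f'(w)}] = (1 + z\bar w)e^{z\bar w}$, which gives $\max_{\b(0;\sqrt 2 S)}\mathrm{Var}(f'(z)) \le 3 S^2 e^{2S^2}$; applying Lemma 2.4.4 of \cite{HKPV} to a suitable rescaling of $f'$ and taking the threshold $t = S^{3/2} e^{S^2}$ yields $\P(\max_{\Tau}|f'(z)| > S^{3/2} e^{S^2}) \le 2 e^{-cS}$. Call this event's complement $\Omega_2$. On $\Omega_1 \cap \Omega_2$, for any $z \in \partial \Tau$ and its nearest net point $z_j$, the mean-value theorem gives $|f(z) - f(z_j)| \le (\text{spacing})\cdot S^{3/2} e^{S^2} \le e^{-\nu S^2}$. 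Writing
\[ |f^*(z)| \ge e^{(|z_j|^2 - |z|^2)/2}|f^*(z_j)| - e^{-|z|^2/2}|f(z) - f(z_j)| \]
and noting that $|(|z_j|^2 - |z|^2)/2|$ is exponentially small in $S^2$ (since $|z_j|+|z|\le \sqrt 2 S$ and the spacing is), one concludes $|f^*(z)| \ge e^{-\nu S^2}$ provided the threshold constant $C_0$ in the definition of $\Omega_1$ is chosen large enough (e.g.\ $C_0 = 4$). Summing the bounds for $\Omega_1^c$ and $\Omega_2^c$ yields the claimed estimate.

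The main obstacle is purely bookkeeping: the parameters $\eta$ and $t$ must be tuned so that (a) the union-bound exponent $-(\nu - 1)S^2$ has the correct sign, reflecting the hypothesis $\nu>1$, and (b) the mean-value error is dominated by the lower bound $C_0 e^{-\nu S^2}$ at the nodes after one converts between $|f|$ and $|f^*|$ using the radial factor $e^{-|z|^2/2}$, which on $\partial \Tau$ can vary by a factor of up to $e^{S^2/8}$ --- in contrast to the proof of Proposition \ref{estim1}, where this factor is exactly the constant $e^{-R^2/2}$ on $\partial\b(0;R)$.
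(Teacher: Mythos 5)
Your argument is correct and is essentially the same as the paper's, which simply says to follow the proof of Proposition \ref{estim1} after bounding $\max_{\Tau}|f'|$ by the maximum over the circumscribing circle of radius $S/\sqrt 2$. You supply the bookkeeping the paper omits, and you correctly identify and handle the one genuinely new point (the radial factor $e^{-|z|^2/2}$ is not constant on $\partial\Tau$), while your parameter choices rightly yield the hypothesis $\nu>1$ in place of the proposition's $\nu>2$.
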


\begin{proof}
We can follow  the proof of Proposition  \ref{estim1}. The maximum of $|f'|$ on $\Tau$ is bounded above by the maximum of $|f'|$ on the circumscribing circle of $\Tau$, whose radius is $S/\sqrt{2}$ and for which we can apply Lemma 2.4.4 from \cite{HKPV}. 
\end{proof}

\subsection{Proof of Theorem \ref{gaf}: existence of critical radius}
\label{gafperc}
We simply observe that Theorem \ref{gafest} proves that the criteria outlined in Proposition \ref{tech} are valid for $\F$, thereby establishing that a critical radius exists for $\F$.

\section{Uniqueness of infinite cluster}
\label{uinf}
In this section we will prove that in the supercritical regime for the Boolean percolation models $(\G,r)$ and $(\F,r)$, a.s. there is exactly one infinite cluster.
\subsection{An approach to uniqueness}
\label{criterion}
We will first describe a proposition which has important implications regarding such uniqueness  for a translation invariant point process $\Pi$. 
\begin{proposition}
\label{critprop}
Let $r>r_c$ for the Boolean percolation model $X(\Pi,r)$, where $\Pi$ is a translation invariant point process on $\R^2$, and $0<r_c<\infty$ is the critical radius. For $R>0$ let $B_R$ denote the set $\{ x \in \R^2 : \|x\|_{\infty} \le R \}$. Suppose the following event has positive probability:
\[ E(R)= \bigg\{ \text{ There is an infinite cluster } C' \text{ with the property that } C'\cap (B_R)^c \text{ contains } \]\[ \text{ at least three infinite clusters and such that there is at least one point from } \Pi \text{ in } C'\cap B_R   \bigg\} \]
Then $\P\l(E(R)\r)=0$.
\end{proposition}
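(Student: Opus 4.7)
The plan is to prove $\P(E(R)) = 0$ by contradiction, using a continuum version of the Burton--Keane trifurcation argument. Suppose to the contrary that $\P(E(R)) = p > 0$. For $y \in \R^2$ denote by $E_y(R)$ the event $E(R)$ with $B_R$ replaced by the translate $y + B_R$; by translation invariance of $\Pi$, $\P(E_y(R)) = p$ for every $y$. Each occurrence of $E_y(R)$ identifies $y + B_R$ as a ``trifurcation box'' for some infinite cluster, and the contradiction will come from comparing the density of such boxes in a large window to a boundary-type upper bound.

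First I would pack many disjoint trifurcation boxes into a large window. Fix $S \gg R$ and take $\Lambda_S \subset B_{S-R}$ to be a lattice of spacing $3R$, so that the boxes $\{y + B_R : y \in \Lambda_S\}$ are pairwise disjoint and contained in $B_S$; then $|\Lambda_S| \ge c\, S^2 / R^2$ for an absolute $c > 0$ when $S$ is large. Setting $T_S := \#\{y \in \Lambda_S : E_y(R) \text{ holds}\}$, linearity of expectation gives $\E[T_S] = p\, |\Lambda_S| \ge c\, p\, S^2 / R^2$.

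Next I would invoke the Burton--Keane tree bound. On $E_y(R)$ the witnessing cluster $C'_y$ has at least three distinct infinite sub-components in $C'_y \cap (y + B_R)^c$; for $S$ large, each of these must reach the outer shell $\Sigma_S := B_S \setminus B_{S - 2r}$. Assemble an abstract forest whose internal vertices are the trifurcation boxes $y + B_R$ with $y \in \Lambda_S$ and $E_y(R)$ holding, joining two such boxes by an edge when they lie in a common infinite cluster and are connected in it by an arc meeting no other trifurcation box, and attaching a leaf to an internal vertex for each of its infinite branches that crosses $\partial B_S$ into $\Sigma_S$. Every internal vertex has degree at least three, so the standard tree identity $(\#\text{leaves}) \ge (\#\text{internal vertices}) + 2$ (applied component-wise and summed) yields
\[
T_S \;\le\; L_S,
\]
where $L_S$ counts the leaves, each of which is witnessed by a distinct point of $\Pi$ in $\Sigma_S$. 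Translation invariance of $\Pi$ together with the absolute continuity of its one-point intensity with respect to Lebesgue measure force the intensity to be a finite constant $\rho$, and so $\E[L_S] \le \rho\, |\Sigma_S| \le C\, r\, S$. Combining,
\[
c\, p\, \frac{S^2}{R^2} \;\le\; \E[T_S] \;\le\; \E[L_S] \;\le\; C\, r\, S,
\]
which is absurd as $S \to \infty$. Hence $\P(E(R)) = 0$.

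The hard part will be the tree bound $T_S \le L_S$: one must give a precise combinatorial definition of the branches of a trifurcation box in the Gilbert graph and show that branches of distinct trifurcation boxes inside a common infinite cluster assemble into a valid forest with every internal vertex of degree $\ge 3$. This is the continuum analogue of the classical Burton--Keane lemma; once correctly formulated, the inequality follows from the familiar leaf-counting identity in trees. The remaining ingredients --- translation invariance, disjoint packing, and the intensity bound on $L_S$ --- are routine.
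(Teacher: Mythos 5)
Your proposal reconstructs the Burton--Keane trifurcation argument adapted to continuum percolation, which is exactly the approach in Meester--Roy's Theorem 3.6 that the paper cites for this proposition (the paper gives no proof of its own). The outline is sound --- packing disjoint trifurcation boxes, the tree/forest leaf-counting bound $T_S \le L_S$, and the contrast between the $S^2$ growth of $\E[T_S]$ and the $S$ growth of $\E[L_S]$ --- and you correctly flag the combinatorial forest construction as the step requiring care, which is precisely the technical content supplied in the cited reference.
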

 
The proof of the above proposition can be found in \cite{MR}, proof of Theorem 3.6. The event $E(R)$ from Proposition \ref{critprop} corresponds to the event $E^0(N)$ there.

A general approach to a proof that a.s. there cannot be inifnitely many infinite clusters is to show that such an event would imply $E(R)$ would occur for some $R$.

\subsection{Uniqueness of infinite clusters: Ginibre ensemble}
\label{uqgin}
In this section we prove that in $X(\G,r)$ with $r>r_c$, we have $\L(r)=1$ a.s.

To this end, we would need to have an understanding of the conditional distribution of the points of $\G$ inside a domain given the points outside. This has been obtained in \cite{GNPS} Theorems 1.1 and 1.2. We state these results below.

Let $\D$ be a bounded open set in $\C$ whose boundary has zero Lebesgue measure, and let $\S_{\iin}$ and $\S_{\out}$ denote the Polish spaces of locally finite point configurations on $\D$ and $\D^c$ respectively. $\G_{\iin}$ and $\G_{\out}$ respectively denote the point processes obtained by restricting $\G$ to $\D$ and $\D^c$.  
\begin{theorem}
 \label{gin-1}
For the Ginibre ensemble, there is a measurable function $N:\S_{\out} \to \nat$ such that a.s. \[ \text{ Number of points in } \G_{\iin} = N(\G_{\out})\hspace{3 pt}.\]
\end{theorem}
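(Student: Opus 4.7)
The goal is to show that the count $\#(\G_{\iin})$ is measurable with respect to $\sigma(\G_{\out})$; given this, a function $N$ with the claimed property arises automatically. My plan is to construct a sequence $(Y_n)$ of $\sigma(\G_{\out})$-measurable random variables converging in $L^2$ to $\#(\G_{\iin})$.

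The main tool is the variance identity for linear statistics of the Ginibre ensemble: for a bounded measurable $\phi : \C \to \R$ with adequate decay,
\[
\mathrm{Var}\l(\sum_{z \in \G} \phi(z)\r) = \frac{1}{2\pi^2} \int\int (\phi(z) - \phi(w))^2\, e^{-|z-w|^2}\, d\mathcal{L}(z)\, d\mathcal{L}(w),
\]
which follows from the determinantal formula for $\rho_2$ combined with the computation $|K(z,w)|^2 e^{-|z|^2-|w|^2} = e^{-|z-w|^2}$ (after absorbing the $1/\pi^2$). I would seek a sequence $\phi_n$ with $\phi_n \equiv 1$ on $\D$ and $\mathrm{Var}(S_{\phi_n}) \to 0$, where $S_{\phi_n} := \sum_{z \in \G} \phi_n(z)$. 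Since $\phi_n \equiv 1$ on $\D$, the decomposition $S_{\phi_n} = \#(\G_{\iin}) + T_n$ with $T_n := \sum_{z \in \G_{\out}} \phi_n(z)$ manifestly $\sigma(\G_{\out})$-measurable would then yield $\#(\G_{\iin}) = \E[S_{\phi_n}] - T_n + o_{L^2}(1)$, exhibiting the count as an $L^2$-limit of $\sigma(\G_{\out})$-measurable quantities.

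For the construction, fix $R_0$ with $\ovl{\D} \subset \b(0; R_0)$ and a sequence $W_n \uparrow \infty$; I take a smoothed version of the radial function $\phi_n(z) = f_n(|z|)$ defined by $f_n(r) = 1$ on $[0, R_0]$, $f_n(r) = 1 - \log(r/R_0)/\log(W_n/R_0)$ on $[R_0, W_n]$, and $f_n(r) = 0$ on $[W_n, \infty)$. Passing to polar coordinates in the variance integral, performing the angular integration via $\int_0^{2\pi} e^{2 r_1 r_2 \cos\theta}\, d\theta = 2\pi I_0(2 r_1 r_2)$, and applying the asymptotic $I_0(x) \sim e^x/\sqrt{2\pi x}$, the leading term reduces to the one-dimensional Sobolev-type integral $\tfrac{1}{2}\int_0^\infty r(f_n'(r))^2\, dr$; for the log-profile above this equals $\tfrac{1}{2}(\log(W_n/R_0))^{-1} \to 0$. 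Boundary and non-asymptotic corrections, controlled by the crude bound $I_0(x) \le e^x$ on bounded ranges of $r$, contribute only $O(\log^{-2}(W_n/R_0))$.

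The main obstacle is producing $\phi_n$: naive linear smoothings of $\mathbf{1}_{\b(0; R_n)}$ give variance bounded below by a positive constant (the variance scales with the perimeter of the transition annulus), regardless of how gradually the transition is carried out. The logarithmic profile above is the Cauchy--Schwarz extremal for $\int r (f')^2\, dr$ under the normalization $\int_{R_0}^{W_n}(-f')\, dr = 1$; it saturates the inequality $1 \le \int r(f')^2\, dr \cdot \int_{R_0}^{W_n} r^{-1}\, dr$ and is precisely what forces the vanishing rate $1/\log(W_n/R_0)$.
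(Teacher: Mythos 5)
The paper itself contains no proof of Theorem~\ref{gin-1}: it is quoted from the companion work \cite{GNPS}, which this paper cites as the source for the conditional-structure results used in Sections~\ref{uqgin} and~\ref{uqgaf}. So there is no in-paper argument to compare against.

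Your proposal is, however, a correct sketch, and it coincides with the variance-reduction argument used in \cite{GNPS}. The steps are sound: for the Ginibre kernel $K(z,w)=e^{z\bar{w}}$ against the background measure $\frac{1}{\pi}e^{-|z|^2}d\mathcal{L}$, the determinantal identity $\mathrm{Var}(S_\phi)=\frac{1}{2}\iint(\phi(z)-\phi(w))^2|K(z,w)|^2\,d\mu(z)\,d\mu(w)$ specializes exactly to your $\frac{1}{2\pi^2}\iint(\phi(z)-\phi(w))^2 e^{-|z-w|^2}\,d\mathcal{L}\,d\mathcal{L}$. The logarithmic ramp $f_n$ drives the radial Dirichlet energy $\int r(f_n')^2\,dr$ down to $1/\log(W_n/R_0)$, and your Cauchy--Schwarz remark correctly identifies $f'\propto 1/r$ as the extremal profile and explains why a linear ramp fails (it gives $(W_n+R_0)/(2(W_n-R_0))$, bounded away from zero). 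Your reduction of the full two-dimensional variance integral to $\tfrac{1}{2}\int r(f')^2\,dr$ via the $I_0$ asymptotics and the Gaussian localization of $|r_1-r_2|$ is correct as a leading-order computation. Since $\phi_n\equiv 1$ on $\D$, the decomposition $S_{\phi_n}=\#\G_{\iin}+T_n$ with $T_n$ $\sigma(\G_{\out})$-measurable and $\mathrm{Var}(S_{\phi_n})\to 0$ shows $\#\G_{\iin}$ is the $L^2$-limit of $\sigma(\G_{\out})$-measurable variables, hence $\sigma(\G_{\out})$-measurable modulo a null set (closedness of $L^2(\sigma(\G_{\out}))$ in $L^2$). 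The one step you leave implicit is passing from $\sigma(\G_{\out})$-measurability of the count to the existence of a measurable $N:\S_{\out}\to\nat$ with $\#\G_{\iin}=N(\G_{\out})$ a.s.; that is the Doob--Dynkin factorization on the configuration space, routine but worth stating explicitly.
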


Let the points of $\G_{\iin}$, taken in uniform random order, be denoted by the vector $\uz$. Let $\rho(\uout,\cdot)$ denote the conditional measure of $\uz$ given $\G_{\out}=\uout$. Since a.s. the length of $\uz$ equals $N(\G_{\out})$, we can as well assume that each measure $\rho(\uout,\cdot)$ is supported on $\D^{N(\uout)}$.

\begin{theorem}
\label{gin-2}
For the Ginibre ensemble, $\P[\G_{\out}]$-a.s. $\rho({\G_{\out}},\cdot)$ and the Lebesgue measure $\el$ on $\D^{N(\G_{\out})}$ are mutually absolutely continuous.
\end{theorem}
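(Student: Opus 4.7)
The natural plan is to derive the conditional law from the explicit joint density of the finite Ginibre $\G_n$, and then transfer to the infinite Ginibre via a limit.

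For $\G_n$, with joint eigenvalue density (in uniform random order)
\[p_n(z_1,\ldots,z_n)\propto e^{-\sum_i |z_i|^2}\prod_{i<j}|z_i-z_j|^2\]
on $\C^n$, fix the outside configuration $\{w_1,\ldots,w_{n-m}\}\subset \D^c$ and the number of inside points to be $m$. A direct Bayes computation yields that the conditional density of $(z_1,\ldots,z_m) \in \D^m$ (inside points in uniform random order) with respect to Lebesgue is proportional to
\[e^{-\sum_i |z_i|^2}\prod_{1\le i<j\le m}|z_i-z_j|^2\prod_{i=1}^m\prod_{k=1}^{n-m}|z_i-w_k|^2,\]
which is strictly positive off the (Lebesgue-null) coincidence set $\{z_i=z_j\}$. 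Thus for each finite $n$, each $m$, and each outside configuration, the conditional law is mutually absolutely continuous with Lebesgue on $\D^m$.

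To transfer to the infinite Ginibre $\G$, I would employ a reverse-martingale approach. For each $R>0$ with $\D \subset B_R$, let $\mathfrak{H}_R$ denote the $\sigma$-algebra generated by the restriction of $\G$ to $B_R \setminus \overline{\D}$. The filtration $\mathfrak{H}_R$ increases to $\sigma(\G_{\out})$ up to null sets as $R\to\infty$, so by the reverse-martingale convergence theorem the regular conditional distributions $\P(\uz \in \cdot \mid \mathfrak{H}_R)$ converge a.s. to $\rho(\G_{\out},\cdot)$. Each intermediate conditional law is a mixture, over the random number $m$ of inside points, of distributions that are mutually absolutely continuous with Lebesgue on $\D^m$, obtained by averaging the finite-$n$ densities over configurations in $B_R^c$ and sending $n\to\infty$. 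By Theorem \ref{gin-1}, the mixing distribution on $m$ degenerates to $\delta_{N(\G_{\out})}$ as $R\to\infty$, so $\rho(\G_{\out},\cdot)$ is supported on $\D^{N(\G_{\out})}$; the direction $\rho \ll \el$ is retained by pushing the Radon--Nikodym derivative through the limit with appropriate uniform integrability, and the direction $\el \ll \rho$ follows from the strict positivity observed above.

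The principal obstacle is the rigorous handling of the double limit in $n$ and $R$ while preserving absolute continuity. Weak convergence of point processes does not automatically transfer to conditional laws, so one needs quantitative control on the conditional densities --- in particular, uniform bounds on the Vandermonde factor $\prod_{i,k}|z_i-w_k|^2$ accounting for outside points near $\partial \D$, together with tightness to prevent mass from escaping onto lower-dimensional strata. An attractive alternative that sidesteps this difficulty is to bypass finite $n$ entirely by working directly with Janossy-density formulas for the determinantal process $\G$ on $\D$ (using the kernel $K(z,w) = e^{z\bar w}$ of the Bargmann--Fock space), expressing the conditional law given $\G_{\out}$ via kernel-theoretic identities in which both absolute-continuity directions are manifest.
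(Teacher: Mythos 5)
Note first that the paper itself does not prove Theorem~\ref{gin-2}; it is stated here as a result taken from the companion paper~\cite{GNPS} and then used as an input in Section~\ref{uqgin}. There is therefore no in-text proof to compare against, and what follows assesses your proposal on its own.

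Your finite-$n$ Bayes computation is correct, and for finite $n$ the inside count $m$ is determined by the outside count, so no mixture over $m$ arises at that stage. The limiting step, however, has a real gap. A minor point: $\mathfrak{H}_R$ increases with $R$, so this is forward, not reverse, martingale convergence. The substantive problem is that martingale convergence of the conditional densities $h_R$ gives an a.s.\ limit $h$ from which $\rho(\G_{\out},\cdot)\ll\el$ can be extracted (granting uniform integrability to rule out a singular part), but the direction actually used in Section~\ref{uqgin} is $\el\ll\rho(\G_{\out},\cdot)$, i.e.\ $h>0$ Lebesgue-a.e., and this does \emph{not} follow from positivity of each $h_R$: a nonnegative martingale can converge to zero on a set of positive measure. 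Your remark that ``the direction $\el\ll\rho$ follows from the strict positivity observed above'' refers only to the finite-stage densities, not to their limit. What is genuinely needed is a quantitative two-sided bound on the factor $\prod_{i,k}|z_i-w_k|^2$ that survives the double limit in $n$ and $R$, exploiting the specific geometry of the outside Ginibre configuration (its rigidity as in Theorem~\ref{gin-1} and the decay of $K(z,w)=e^{z\bar w}$). You correctly flag this as the principal obstacle, but it cannot be dispatched by invoking ``appropriate uniform integrability''; it is the actual content of the theorem, and neither the martingale scheme nor the Janossy-density alternative you float (which does not reduce conditioning on an infinite outside configuration to a single Janossy computation) supplies the positivity estimate.
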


We are now ready to prove Theorem \ref{gin}.

\begin{proof} [\textbf{Proof of Theorem \ref{gin}}]
Let $r$ be such that $\L(r)>0$ a.s.
In what follows, we will repeatedly use the fact that if there are two points $x,y \in \R^2$ at Euclidean distance $d$, then there can be connected to each other by $(1+\lceil {d/2r} \rceil)$  open disks of radius $r$, such that no two disks are exactly at a distance $r$.  

We will first deal with the case where a.s. $\L(r)>1$ but finite. A similar argument will show that if $3 \le \L(r) \le \infty$ then the event $E(R)$ as in Proposition \ref{critprop} occurs, with a suitable choice of $R$. This would rule out the possibility $\L(r)=\infty$, and complete the proof.  

We argue by contradiction, and let if possible $1<\L(r)<\infty$ a.s. 
Let  $\D_1 \subset \D_2$ be two concentric open disks centred at the origin and respectively having radii $R_1 < R_2$. 

Let $\e$ be the event that:
\begin{itemize}
 \item (i) There are two infinite clusters $\c_1$ and $\c_2$ in the underlying graph $\j$ such that $ \c_1 \cap \D_1 \ne \emptyset \ne \c_2 \cap \D_1$.
 \item (ii) There exists a finite cluster $\c_3$ of vertices of $\j$  which has $\ge 1+ \lceil 2 R_1/r \rceil $  vertices such that $\c_3 \subset \text{ Int} \l(\D_2 \setminus \D_1 \r) $, where $\text{ Int}(A)$ is the interior of the set $A$.
\end{itemize}

By ergodicity of $\G$, $\P(\e)>0$ when $R_1$ and $R_2$ are large enough. Fix such disks $\D_1$ and $\D_2$. We denote the configuration  of points outside $\D_2$ by $\o$ and those inside $\D_2$ by $\z$. Let the number of points in $\D_2$ be denoted by $N(\o)$. Any two points of $\Pi$ inside $\D_1$ are at most at a Euclidean distance of $2R_1$, and  hence can be connected by at most $\l(1+ \lceil 2R_1/r \rceil \r)$ open disks of radius $r$ such that no two points are exactly at a distance $r$. We define an event $\e'$ as follows: corresponding to every configuration $(\z,\o)$ in $\e$, we define a new configuration $(\z',\o)$ where $\z'$ is obtained by moving  $\l( 1+ \lceil 2R_1/r \rceil \r)$ points of $\c_3$ to the interior of $\D_1$ and placing them such that in  the new underlying  graph $\j'$ (for definition see Section \ref{ug}) the clusters $\c_1$ and $\c_2$ become connected with each other. Similar to the observations made in Section \ref{ug}, we can move each point in $\z'$ in sufficiently small disks around itself, resulting in new configurations $(\z'',\o)$ such that the connectivity properties of $\j'$ as well as the number of points in $\D_2$ remain unaltered. The event $\e'$ consists of all such configurations $(\z'',\o)$ as $(\z,\o)$ varies over all configurations in $\e$. Observe that for each $\o$, the set of configurations $\{\z'': (\z'',\o) \in \e' \}$ constitutes an open subset of $\D^{N(\o)}$, when considered as a vector in the usual way.  Since $\P(\e)>0$, by Theorem \ref{gin-2} applied to the domain $\D_2$, we also have $\P(\e')>0$. But on $\e'$, there is one less infinite cluster than on $\e$. This gives us the desired contradiction, and proves that $\P(1<\L(r)<\infty)=0$.

Had it been the case $\L(r)\ge 3$ a.s., observe that  an argument analogous to the previous paragraph can be carried through with three instead of two  infinite clusters ($\c_1$ and $\c_2$ above). The end result would be that with positive probability we can connect all the three clusters with each other.
If $\L(r)=\infty$ a.s. then we carry out the above argument with three of the infinite clusters, and observe that the event $E(R)$ as in Proposition \ref{critprop} occurs with a set $B_R$ where $R > R_2$, on the modified event (analogous to $\e'$ above). This proves that $\P(\L(r)=\infty)=0$.
\end{proof}

\subsection{Uniqueness of infinite clusters: Gaussian zeroes}
\label{uqgaf}
In this section we prove that in $X(\F,r)$ with $r>r_c$, we have $\L(r)=1$ a.s.

To this end, we would need to have an understanding of the conditional distribution of the points of $\F$ inside a domain given the points outside. This has been obtained in \cite{GNPS} Theorems 1.3 and 1.4. $\F_{\iin}$ and $\F_{\out}$ respectively denote the point processes obtained by restricting $\F$ to $\D$ and $\D^c$ respectively.  We state these results below. Some of the notation is from Section \ref{uqgin}.

\begin{theorem}
 \label{gaf-1}
For the GAF zero ensemble, \newline
\noindent
(i)There is a measurable function $N:\S_{\out} \to \nat$ such that a.s.  \[ \text{ Number of points in } \F_{\iin} = N(\F_{\out}).\]
(ii)There is a measurable function $S:\S_{\out} \to \C$ such that a.s.  \[ \text{ Sum of the points in } \F_{\iin} = S(\F_{\out}).\]
\end{theorem}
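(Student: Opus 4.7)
The plan is to use an Edelman--Kostlan identity for linear statistics of GAF zeros to express $N(\F_{\iin})$ and $S(\F_{\iin})$ as almost-sure limits of $\F_{\out}$-measurable random variables. Writing $f^*(z) = e^{-|z|^2/2}f(z)$, the distributional Laplacian satisfies $\tfrac{1}{2\pi}\Delta\log|f^*| = \mu_\F - \tfrac{1}{\pi}\,dA$ (with $\mu_\F$ the counting measure of $\F$), so that for any smooth compactly supported $\phi$,
\[
\sum_{\zeta\in\F}\phi(\zeta) \;=\; \frac{1}{\pi}\int \phi\,dA \;+\; \frac{1}{2\pi}\int \log|f^*(z)|\,\Delta\phi(z)\,dA.
\]
Since $|f^*(z)|$ is a standard complex Gaussian for every $z$, $\log|f^*(z)|$ has the same moments pointwise, and its covariance $\mathrm{Cov}(\log|f^*(z)|,\log|f^*(w)|)$ decays as $|z-w|\to\infty$. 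This gives the variance bound $\mathrm{Var}\bigl(\sum_\zeta\phi(\zeta)\bigr)\le C\|\Delta\phi\|_{L^2}^2$ together with $\E\int\log|f^*|\,\Delta\phi = 0$ for any such $\phi$.

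For (i), I would take a sequence $\phi_n$ equal to $1$ on a ball $B(0,R_0)\supset\D$, equal to $\log(R_n/|z|)/\log(R_n/R_0)$ on the annulus $R_0\le|z|\le R_n$, and $0$ outside $B(0,R_n)$ (smoothed near the two circles). Splitting
\[
\sum_{\zeta\in\F}\phi_n(\zeta) \;=\; N(\F_{\iin}) + \sum_{\zeta\in\F_{\out}}\phi_n(\zeta)
\]
and comparing with the identity above gives
\[
N(\F_{\iin}) \;=\; \Bigl(\tfrac{1}{\pi}\int\phi_n\,dA - \sum_{\zeta\in\F_{\out}}\phi_n(\zeta)\Bigr) + \tfrac{1}{2\pi}\int\log|f^*(z)|\,\Delta\phi_n(z)\,dA.
\]
The measure $\Delta\phi_n$ is concentrated in thin shells near $|z|=R_0$ and $|z|=R_n$ with $\|\Delta\phi_n\|_{L^2}\to 0$ as $R_n\to\infty$, so the last integral converges to $0$ in $L^2$ (hence almost surely along a subsequence). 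The bracketed quantity is $\F_{\out}$-measurable, and since the $L^2$-limit of $\F_{\out}$-measurable random variables is itself $\F_{\out}$-measurable, this proves (i). For (ii), I would repeat the argument with $\phi_n$ replaced by a test function equal to $z$ on $\D$ and extended to the annulus $R_0\le|z|\le R_n$ as the harmonic function with Dirichlet boundary data $z$ on $|z|=R_0$ and $0$ on $|z|=R_n$; this yields $S(\F_{\iin})$ as an $L^2$-limit of $\F_{\out}$-measurable random variables.

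The main obstacle is making precise the sense in which $\phi_n$ and its ``dipole'' counterpart are \emph{the} admissible test functions: they correspond exactly to the two data $N$ and $S$ that $\F_{\out}$ almost-surely determines. Heuristically, the Hadamard factorisation of $f$ as an order-$2$ entire function, combined with its growth $\E|f(z)|^2 = e^{|z|^2}$ fixing the quadratic coefficient of the exponential pre-factor, leaves exactly the linear and constant coefficients undetermined by the zero set --- corresponding respectively to $S$ and $N$. The technical work required is quantitative Offord-type control on $\log|f^*|$ along large circles, sharp enough to justify the $L^2$-convergence $\int \log|f^*|\,\Delta\phi_n \to 0$ with the specific decay rate of $\|\Delta\phi_n\|_{L^2}$, and to ensure that the $\F_{\out}$-expression in the bracket above stabilises in the limit.
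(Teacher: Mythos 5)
Note that this theorem is not proved in the present paper: it is quoted verbatim from the reference \cite{GNPS}, so there is no in-paper proof to compare against. Your strategy --- expressing $N(\F_{\iin})$ and $S(\F_{\iin})$ as $L^2$-limits of $\F_{\out}$-measurable linear statistics via the Edelman--Kostlan identity, using a variance bound controlled by $\|\Delta\phi\|_{L^2}$, and choosing test functions whose Laplacians vanish in $L^2$ --- is indeed the route taken in \cite{GNPS}. The identity $\frac{1}{2\pi}\Delta\log|f^*|=\mu_{\F}-\frac{1}{\pi}\,dA$ is correct, the covariance of $\log|f^*(z)|$ and $\log|f^*(w)|$ does decay super-exponentially in $|z-w|$, so the bound $\mathrm{Var}\bigl(\sum_\zeta\phi(\zeta)\bigr)\lesssim\|\Delta\phi\|_{L^2}^2$ is legitimate, and your logarithmic-tent $\phi_n$ for part (i) works: the radial derivative jump at $|z|=R_0$ is $O\bigl(1/\log(R_n/R_0)\bigr)$, so $\|\Delta\phi_n\|_{L^2}\to 0$.

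There is a genuine gap in your construction for part (ii). The annular harmonic extension of the Dirichlet data $z$ at $|z|=R_0$ and $0$ at $|z|=R_n$ is $h(re^{i\theta})=(ar+b/r)e^{i\theta}$ with $a=-R_0^2/(R_n^2-R_0^2)$ and $b=R_n^2R_0^2/(R_n^2-R_0^2)$. Its radial derivative at $r=R_0^+$ is $-(R_0^2+R_n^2)/(R_n^2-R_0^2)\to -1$ as $R_n\to\infty$, while inside $B(0,R_0)$ the radial derivative of $\phi_n=re^{i\theta}$ equals $+1$. The derivative jump across $|z|=R_0$ therefore converges to $-2$ and does not vanish, so the singular part of $\Delta\phi_n$ on that circle has $L^2$-norm (after any fixed-width smoothing) bounded away from zero; $\|\Delta\phi_n\|_{L^2}\not\to 0$, and the key variance step fails. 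A working choice is a multiplicative cutoff: set $\phi_n(z)=z\,\psi_n(|z|)$ with $\psi_n(r)=\log(R_n/r)/\log(R_n/R_0)$ on $[R_0,R_n]$, $\psi_n\equiv 1$ on $[0,R_0]$ and $\psi_n\equiv 0$ for $r\ge R_n$. In polar coordinates $\Delta\bigl(r\psi_n(r)e^{i\theta}\bigr)=(3\psi_n'+r\psi_n'')e^{i\theta}=-\tfrac{2}{r\log(R_n/R_0)}e^{i\theta}$ on the annulus, and the derivative jumps at both circles are $O\bigl(1/\log(R_n/R_0)\bigr)$, so $\|\Delta\phi_n\|_{L^2}^2=O\bigl(1/\log(R_n/R_0)\bigr)\to 0$. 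With this replacement, the rest of your argument for (ii) goes through as written.
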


Define the set \[ \Sigma_{S(\F_{\out})} := \{ \uz \in \D^{N(\F_{\out})} : \sum_{j=1}^{N(\F_{\out})} \z_j = S(\F_{\out}) \}\] where $\uz=(\z_1,\cdots,\z_{N(\F_{\out})})$.

Since a.s. the length of $\uz$ equals $N(\F_{\out})$, we can as well assume that each measure $\rho(\uout,\cdot)$ gives us the distribution of a random vector in  $\D^{N({\uout})}$ supported on $\Sigma_{S({\uout})}$.

\begin{theorem}
\label{gaf-2}
For the GAF zero ensemble, $\P[\F_{\out}]$-a.s. $\rho(\F_{\out},\cdot)$ and the Lebesgue measure $\els$ on $\Sigma_{S(\F_{\out})}$ are mutually absolutely continuous.
\end{theorem}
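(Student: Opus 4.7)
The plan is to reduce the infinite-dimensional problem to a finite-dimensional one by truncation, then pass to the limit. I would start from the polynomial approximations $f_n(z) = \sum_{k=0}^{n}\xi_k z^k/\sqrt{k!}$, which have exactly $n$ zeros in $\C$. For each $n$, the joint law of the zero vector has an explicit density, obtained by pushing the i.i.d.\ Gaussian density of the coefficient vector $(\xi_0,\ldots,\xi_n)$ through the elementary symmetric functions map sending roots (plus the leading coefficient) to coefficients; the Jacobian of this map is a power of the Vandermonde $\prod_{i<j}(w_i-w_j)$.

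First, for each finite $n$, I would disintegrate this joint density against the outside configuration $\F_n \cap \D^c$ to produce an explicit conditional density for $\F_n \cap \D$. At finite $n$ the outside carries more information than just the inside count and sum, but in the limit $n\to\infty$ the rigidity result in Theorem \ref{gaf-1} guarantees that only two scalar functionals survive, namely $N$ and $S$. So the conditional density should live on the affine hyperplane $\Sigma_{S(\F_{\out})}$ and be a smooth, strictly positive function of the inside zeros with respect to the ambient Lebesgue measure $\els$ on that hyperplane.

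Next I would pass to the limit $n \to \infty$. Since $f_n \to f$ uniformly on compacts almost surely, Hurwitz's theorem gives convergence of the zero sets on compacts; one must upgrade this to convergence of the conditional laws given the outside. The cleanest route is to fix a smooth coordinate system on $\Sigma_{S(\F_{\out})}$ --- for instance the Newton power sums $p_2, \ldots, p_N$ with $p_1 = S$ held fixed --- express the conditional density in these coordinates, and verify pointwise convergence off the collision locus $\{z_i = z_j\}$. For the reverse direction, that is absolute continuity of $\els$ with respect to $\rho(\F_{\out}, \cdot)$, a direct full-support argument works: since $(\xi_k)_{k\ge 0}$ is an i.i.d.\ Gaussian sequence with full support on $\C^{\N}$, any target configuration in $\Sigma_{S(\F_{\out})}$ can be approached by Gaussian samples whose inside zeros lie close to the target while the outside configuration is nearly unchanged, giving positive conditional mass to every open subset of the hyperplane.

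The main obstacle is the limiting step combined with the fact that we condition on a null event $\{\F_{\out} = \text{specific configuration}\}$, so a regular conditional probability framework is unavoidable throughout, and one must identify a version of the conditional density that behaves continuously in $n$. A second technical headache is that the Vandermonde-type Jacobian degenerates on the collision set, which can reach $\partial \D$; one must bound the contribution of near-degenerate configurations uniformly in $n$ so that absolute continuity is not lost in the limit. Choosing a parametrization of $\Sigma_{S(\F_{\out})}$ that is simultaneously explicit enough for the finite-$n$ computation and robust enough to survive the limit is where the bulk of the work would lie.
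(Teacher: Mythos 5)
The statement you set out to prove is not actually proved in this paper. Theorem \ref{gaf-2} is quoted verbatim from \cite{GNPS} (Ghosh and Peres, \emph{Rigidity and Tolerance in point processes: Gaussian zeroes and Ginibre eigenvalues}), and the present paper simply imports it as a black box in Section \ref{uqgaf}, where it is used to perturb zeros inside a fixed disk while leaving the outside configuration intact. There is therefore no ``paper's own proof'' here to compare against; the comparison would have to be made against the companion paper.

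Regarding the proposal itself: the broad strategy --- truncate to the degree-$n$ polynomial $f_n$, compute an explicit joint zero density via the Vandermonde Jacobian, disintegrate, and pass to the limit --- is in the right spirit, and you have correctly identified the two serious obstacles (conditioning on a null event requires a regular conditional probability framework that behaves well under $n\to\infty$, and the Jacobian degenerates on the collision locus). However, as written the argument has a real gap at the limiting step. For each finite $n$, the outside zero configuration in $\D^c$ consists of only finitely many points and, together with the leading coefficient, pins down $f_n$ up to an $S_n$-orbit; the conditional law of the inside zeros given the outside is then typically a discrete measure (a finite set of root configurations), not something absolutely continuous with respect to $\els$. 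So the claim that the finite-$n$ conditional density ``should live on'' $\Sigma_{S(\F_{\out})}$ and be smooth and strictly positive is not justified: absolute continuity is precisely the nontrivial phenomenon that emerges in the limit, and you cannot get it by pointwise convergence of densities that do not exist at finite $n$. What is needed instead is either (a) conditioning on a coarser, non-degenerate $\sigma$-algebra at finite $n$ (e.g.\ the configuration in an annulus, not all of $\D^c$) and controlling how the extra information disappears as $n\to\infty$, or (b) working directly with the infinite GAF via its covariance structure and reproducing-kernel Hilbert space, showing that any finite perturbation of inside zeros respecting $N$ and $S$ corresponds to an admissible (Cameron--Martin) shift of the Gaussian field and hence to an equivalent change of measure. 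The companion paper \cite{GNPS} takes a route closer to (b); your Vandermonde-based scheme would need the repair in (a) to be salvageable, and the uniform control over near-collision configurations you flag would then become the dominant technical burden.
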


We are now ready to prove Theorem \ref{gaf}.

\begin{proof}[\textbf{Proof of Theorem \ref{gaf} : uniqueness of infinite cluster }] 
The proof follows the contour of Section \ref{uqgin}, with extended arguments to  take care of the fact that for $\F$ there are two conserved quantities for  local perturbations of the zeros inside a disk : their number and their sum, unlike $\G$ where only the number of points is conserved. 

We first show that it cannot be true that a.s. $1 < \L(r) < \infty$. We argue by contradiction, and let if possible $1<\L(r)<\infty$ a.s. 
We will define events $\e$ and $\e'$ in analogy to Proposition \ref{uqgin} such that on $\e'$ there one less infinite cluster than on $\e$ and $\P(\e)>0$ and $\P(\e')>0$. 

Let  $\D_1,\D_2$ and $\D_3$ be two concentric open disks centred at the origin and respectively having radii $R_1 < R_2 < R_3$. 

Let $\e$ be the event that : \newline
(i)$ \c_1 \cap \D \ne \emptyset \ne \c_2 \cap \D $ for two infinite clusters $\c_1$ and $\c_2$ \newline
(ii)$\exists$ a cluster $\c_3$ of vertices of the underlying graph $\j$ which has $\ge n = 1+ \lceil 2R_1/r \rceil $  vertices such that $\c_3 \subset \text{ Int } \l(\D_2 \setminus \D_1 \r) $ \newline
(iii)$\exists$ a cluster $\c_4 \subset \text{ Int } \l(\D_3 \setminus \D_2 \r)$ with $\ge n'= 2R_2n$ vertices such that $\text{ dist } (\c_4, \j \setminus \c_4) > 10$ (recall that $\j$ denotes the underlying graph, for definition see Section \ref{ug}).

By the ergodicity of $\F$, $\P(\e)>0$ when $R_i,i=1,2,3$  are large enough. Fix such disks $\D_i,i=1,2,3$. We denote the configuration of points outside $\D_3$ by $\o$ and those inside $\D_3$ by $\z$. Let the number of points in $\D_3$ be denoted by $N(\o)$ and let their sum be $S(\o)$. 

We start with a configuration $(\z,\o)$ in $\e$. We perform the same operations as in Proposition \ref{uqgin} with the points inside $\D_2$. However, in $\F$, unlike in $\G$, we need to further ensure that the sum of the points inside $\D_3$ remain unchanged at $S(\o)$. We note that due to the operations already performed on the points inside $\D_2$, the sum of the points inside $\D_3$ has changed by at most $2R_2n$, since $\le  n$ points have been moved and each of them can move by at most $2R_2$ which is the diameter of $\D_2$. We observe that we can compensate for this  by translating each point in $\c_4$ by an amount $\le 1$ in an appropriate direction. Due to the separation condition in (iii) in the definition of $\e$, this does not change the connectivity properties of any vertex in $\j \setminus \c_4$. 

By the observations made in Section \ref{ug},  we can move each point in $\z'$ in sufficiently small disks around itself, resulting in new configurations $(\z'',\o)$ such that the connectivity properties of $\j'$ as well as the number of the points in $\D_3$ remain unaltered. The event $\e'$ consists of all such configurations $(\z'',\o)$ as $(\z,\o)$ varies over all configurations in $\e$. Observe that for each $\o$, the set of configurations $\{\z'': (\z'',\o) \in \e' \}$ constitutes an open subset of $\D_3^{N(\o)}$, when considered as a vector in the usual way, hence its intersection with $\Sigma_{S(\o)}$ is an open subset of $\Sigma_{S(\o)}$. Since $\P(\e)>0$, by Theorem \ref{gaf-2} applied to the domain $\D_3$, we also have $\P(\e')>0$. But in $\e'$, there is one less  infinite cluster than in $\e$. This gives us the desired contradiction, and proves that $\P(1<\L(r)<\infty)=0$.

We take care of the case $\L(r)=\infty$ as we did in the proof of Theorem \ref{gin}. Had it been the case $\L(r)\ge 3$ a.s.,  an argument analogous to the previous paragraph can be carried through with three instead of two  infinite clusters ($\c_1$ and $\c_2$ above), with the end result that with positive probability we can connect all the three infinite clusters with each other. If $\L(r)=\infty$ a.s. then we carry out the above argument with three of the infinite clusters, and observe that the event $E(R)$ in Proposition \ref{critprop} occurs on the modified event (analogous to $\e'$ above) with a set $B_R$ where $R > R_3$. This proves that $\P(\L(r)=\infty)=0$.
\end{proof}

\textbf{Acknowledgements.}
We are grateful to Fedor Nazarov for suggesting the approach to Theorem \ref{gafest} and helpful discussions.

\end{document}